\newtheorem{thm}{Theorem}[section]
\newtheorem{prop}[thm]{Proposition}
\newtheorem{cor}[thm]{Corollary}
\newtheorem{lem}[thm]{Lemma}
\newtheorem{conj}[thm]{Conjecture}
\theoremstyle{definition}
\newtheorem{defn}[thm]{Definition}
\newtheorem{exa}[thm]{Example}
\newcommand{\da}{\hs{-2pt}\downarrow}
\DeclareMathOperator{\Rk}{Rk}
\DeclareMathOperator{\R}{R}
\DeclareMathOperator{\SR}{SR}
\DeclareMathOperator{\RB}{RB}
\DeclareMathOperator{\SRB}{SRB}
\DeclareMathOperator{\Hilb}{Hilb}
\newcommand{\cSA}{\cS\cA}
\newcommand{\cSAB}{\cS\cA\cB}
\newcommand{\ben}{\begin{enumerate}}
\newcommand{\een}{\end{enumerate}}
\newcommand{\ble}{\begin{lem}}
\newcommand{\ele}{\end{lem}}
\newcommand{\bth}{\begin{thm}}
\renewcommand{\eth}{\end{thm}}
\newcommand{\bpr}{\begin{prop}}
\newcommand{\epr}{\end{prop}}
\newcommand{\bco}{\begin{cor}}
\newcommand{\eco}{\end{cor}}
\newcommand{\bcon}{\begin{conj}}
\newcommand{\econ}{\end{conj}}
\newcommand{\bde}{\begin{defn}}
\newcommand{\ede}{\end{defn}}
\newcommand{\bex}{\begin{exa}}
\newcommand{\eex}{\end{exa}}
\newcommand{\barr}{\begin{array}}
\newcommand{\earr}{\end{array}}
\newcommand{\btab}{\begin{tabular}}
\newcommand{\etab}{\end{tabular}}
\newcommand{\beq}{\begin{equation}}
\newcommand{\eeq}{\end{equation}}
\newcommand{\bea}{\begin{eqnarray*}}
\newcommand{\eea}{\end{eqnarray*}}
\newcommand{\bal}{\begin{align*}}
\newcommand{\bce}{\begin{center}}
\newcommand{\ece}{\end{center}}
\newcommand{\bpi}{\begin{picture}}
\newcommand{\epi}{\end{picture}}
\newcommand{\bpp}{\begin{picture}}
\newcommand{\epp}{\end{picture}}
\newcommand{\bfi}{\begin{figure} \begin{center}}
\newcommand{\efi}{\end{center} \end{figure}}
\newcommand{\bprf}{\begin{proof}}
\newcommand{\eprf}{\end{proof}\medskip}
\newcommand{\bsl}{\begin{slide}{}}
\newcommand{\esl}{\end{slide}}
\newcommand{\bfr}{\begin{frame}}
\newcommand{\efr}{\end{frame}}
\newcommand{\comp}{\models}
\newcommand{\hqed}{\hfill \qed}
\newcommand{\eqqed}[1]{$\rule{1ex}{0ex}\hfill{\dil#1}\hfill\qed$}
\newcommand{\ol}{\overline}
\newcommand{\hs}[1]{\hspace{#1}}
\newcommand{\hso}[1]{\hspace{-1pt}}
\newcommand{\vs}[1]{\vspace{#1}}
\newcommand{\qmq}[1]{\quad\mbox{#1}\quad}
\newcommand{\sbe}{\subseteq}
\newcommand{\setm}{\setminus}
\newcommand{\iso}{\cong}
\newcommand{\Cong}{\equiv}
\newcommand{\zh}{\hat{0}}
\newcommand{\oh}{\hat{1}}
\newcommand{\ptn}{\vdash}
\newcommand{\case}[4]{\left\{\barr{ll}#1&\mbox{#2}\\#3&\mbox{#4}\earr\right.}
\newcommand{\fl}[1]{\lfloor #1 \rfloor}
\newcommand{\gau}[2]{\left[ \barr{c} #1 \\ #2 \earr \right]}
\def\<{\langle}
\def\>{\rangle}
\newcommand{\spn}[1]{\langle{#1}\rangle}
\newcommand{\ree}[1]{(\ref{#1})}
\newcommand{\ra}{\rightarrow}
\newcommand{\al}{\alpha}
\newcommand{\be}{\beta}
\newcommand{\de}{\delta}
\newcommand{\ep}{\epsilon}
\newcommand{\om}{\omega}
\newcommand{\si}{\sigma}
\renewcommand{\th}{\theta}
\newcommand{\Si}{\Sigma}
\newcommand{\bx}{{\bf x}}
\newcommand{\bbN}{{\mathbb N}}
\newcommand{\bbQ}{{\mathbb Q}}
\newcommand{\bbR}{{\mathbb R}}
\newcommand{\bbZ}{{\mathbb Z}}
\newcommand{\cA}{{\cal A}}
\newcommand{\cB}{{\cal B}}
\newcommand{\cL}{{\cal L}}
\newcommand{\cS}{{\cal S}}
\newcommand{\cT}{{\cal T}}
\newcommand{\fS}{{\mathfrak S}}
\newcommand{\Sb}{\ol{S}}
\DeclareMathOperator{\Des}{Des}
\DeclareMathOperator{\inv}{inv}
\DeclareMathOperator{\Inv}{Inv}
\DeclareMathOperator{\maj}{maj}
\DeclareMathOperator{\Mod}{mod}
\DeclareMathOperator{\rk}{rk}
\DeclareMathOperator{\sgn}{sgn}
\newcommand{\dil}{\displaystyle}
\begin{document}

\pagestyle{plain}

\title{$q$-Stirling numbers in type $B$ 
}
\author{
Bruce E. Sagan\\[-5pt]
\small Department of Mathematics, Michigan State University,\\[-5pt]
\small East Lansing, MI 48824, USA, {\tt sagan@math.msu.edu}
\\
and
\\
Joshua P. Swanson\\[-5pt]
\small Department of Mathematics, Univesity of Southern California,\\[-5pt]
\small Los Angeles, CA 90007, USA, {\tt swansonj@usc.edu}
}

\date{\today\\[10pt]
	\begin{flushleft}
	\small Key Words: Artin basis, Coxeter group of type $B$, super coinvariant algebra, generating function, Hilbert series, ordered set partition, $q$-analogue, signed permutation, signed set partition, Stirling number, symmetric polynomial
	                                       \\[5pt]
	\small AMS subject classification (2020):  05A05, 05A18  (Primary) 05A15, 05A30, 05E05, 05E16  (Secondary)
	\end{flushleft}}

\maketitle

\begin{abstract}
Stirling numbers, which count partitions of a set and permutations in the symmetric group, have found extensive application in combinatorics, geometry, and algebra.  We study analogues and $q$-analogues of these numbers corresponding to the Coxeter group of type $B$.  In particular, we show how they are related to complete homogeneous and elementary symmetric polynomials; demonstrate how they $q$-count signed partitions and permutations; compute their ordinary, exponential, and $q$-exponential generating functions; and prove various identities about them.  Ordered analogues of the $q$-Stirling numbers of the second kind have recently appeared in conjectures of Zabrocki and of Swanson--Wallach concerning the Hilbert series of certain super coinvariant algebras.  We provide conjectural bases for these algebras and show that they have the correct Hilbert series. 
\end{abstract}

\tableofcontents

\section{Introduction}
\label{I}

Let $\bbZ$ and $\bbN$ be the integers and nonnegative integers, respectively. If $n\in\bbN$. then we will be interested in two intervals of integers, namely
$$
[n]=\{1,2,\ldots,n\} \qmq{and}
\spn{n} = \{-n,-n+1,\ldots,n-1,n\}.
$$
If $n\in\bbZ$ and $q$ is a variable then we let
\begin{align*}
[n]_q &= \frac{1 - q^n}{1 - q},\\  
[n]_q!& = [n]_q [n-1]_q \cdots [1]_q,\\
[n]_q!! &= [n]_q [n-2]_q [n-4]_q \cdots,
\end{align*}
 where the double factorial ends at $[1]_q$ or $[2]_q$ depending on whether $n$ is odd or even, respectively.
 Note that if $n<0$ then $[n]_q!=[n]_q!!=1$ because both are the empty product.
 Note also that if $n\ge0$ then 
 $$
 [n]_q = 1+q+q^2+\cdots + q^{n-1}.
 $$
We will often drop the subscript $q$ in such notation if there can be no confusion about whether $[n]$ denotes the polynomial or the set.

\subsection{Classical $q$-Stirling numbers}

The purpose of the present work is to give a comprehensive treatment of the Stirling numbers in type $B$ and their $q$-analogues. We begin by defining the various classical Stirling numbers in type $A$ and their $q$-analogues recursively. Each recursion in this paper will always have $n \in \bbN$ and $k \in \bbZ$ with the same boundary condition when $n=0$. We start with the Stirling numbers of the second kind, which arise more frequently in the context of $q$-analogues.

\begin{defn}
The {\em (type $A$) Stirling numbers of the second kind} are $S(n,k)$ for $n\in\bbN$ and $k\in\bbZ$ 
defined by the initial condition $S(0,k)=\de_{0,k}$  (Kronecker delta) and for $n \geq 1$,
\begin{equation}\label{eq:S_rec}
S(n,k) = S(n-1,k-1)+k S(n-1,k).
\end{equation}

\end{defn}

It is well known that $S(n,k)$ is the number of partitions of the set $[n]$ into $k$ non-empty subsets called {\em blocks}.  These partitions are in bijection with subspaces of dimension $n-k$ in the intersection lattice of the type $A_{n-1}$ Coxeter group.

We will be primarily interested in $q$-analogues of Stirling numbers. There are in fact {\em two} standard $q$-analogues of the Stirling numbers of the second kind, which differ by a $q$-shift. We will call the {\em (type $A$) $q$-Stirling numbers of the second kind} the polynomials $S[n, k]$ in the variable $q$ obtained by replacing \eqref{eq:S_rec} with
\begin{equation}\label{eq:Sq_rec}
S[n,k] = S[n-1,k-1]+[k] S[n-1,k].
\end{equation}
Sometimes $\Sb[n, k] = q^{\binom{k}{2}} S[n, k]$ is also encountered, which replaces \eqref{eq:S_rec} with
\begin{equation}\label{eq:Sq_rec2}
\Sb[n,k] = q^{k-1} \Sb[n-1,k-1]+[k] \Sb[n-1,k].
\end{equation}

Some of the work related to $q$-Stirling numbers concerns the following ordered analogue.
\begin{defn}
The {\em (type $A$) ordered $q$-Stirling numbers of the second kind} are
$$
S^o[n,k]= [k]! S[n,k],
$$
and $\Sb^o[n,k] = [k]! \Sb[n, k]$.
\end{defn}
\noindent When $q=1$, $S^o(n, k)$ counts the number of \textit{ordered set partitions} of $[n]$ with $k$ blocks.

We now recall the Stirling numbers of the first kind and their $q$-analogue.
\begin{defn}
The {\em (signless, type $A$) Stirling numbers of the first kind} are $c(n,k)$ for $n \in \bbN$ and $k \in \bbZ$ defined by the initial condition $c(0, k) = \delta_{0, k}$ and for $n \geq 1$,
\begin{equation}\label{eq:c_rec}
c(n,k)=c(n-1,k-1)+(n-1)c(n-1,k).
\end{equation}
Their signed counterparts are
$$
s(n,k)=(-1)^{n-k} c(n,k).
$$
\end{defn}

Combinatorially, $c(n,k)$ counts the number of elements in the symmetric group  of permutations of $[n]$ which have $k$ cycles in their disjoint cycle decomposition. The {\em (signless, type $A$) $q$-Stirling numbers of the first kind} are the polynomials $c[n, k]$ obtained by replacing \eqref{eq:c_rec} with
\begin{equation}\label{eq:cq_rec}
c[n,k]=c[n-1,k-1]+[n-1] c[n-1,k].
\end{equation}
Their signed counterparts are $s[n, k] = (-1)^{n-k} c[n, k]$.

Stirling numbers of both kinds have been extensively studied in combinatorics and have interesting applications in algebra and geometry.  See the texts of Sagan~\cite{sag:aoc} or Stanley~\cite{sta:ec1} for more information. We next review the literature on $q$-Stirling numbers.

\subsection{Existing $q$-Stirling literature}

The $q$-analogues above have been frequently studied, especially those of the second kind.  In the history which follows, we will sometimes make no distinction between the two different variants of the $S[n,k]$.  The second kind $q$-Stirlings first appeared in the work of Carlitz on abelian fields~\cite{car:af} and $q$-Bernoulli polynomials~\cite{car:qbn}.
Then Gould~\cite{gou:qsn} studied $q$-Stirling numbers of the first and second kinds, defining them in terms of elementary and complete homogeneous symmetric polynomials.  
The $q$-Stirling numbers of the first kind also appeared in the work of Gessel~\cite{ges:qae} on a $q$-analogue of the exponential formula.

By weighting the blocks of a partition, Garsia~\cite{gar:miq} was the first to show that $\Sb[n,k]$  can be considered as the generating function for a statistic on set partitions, and Rawlings~\cite{raw:rmi} generalized this approach.
Milne~\cite{mil:rgf} showed that restricted growth functions (which are equinumerous with set partitions) could be used to give two statistics, one for each version of $S[n,k]$, both of which are similar to the inversion statistic on permutations.
Wachs and White~\cite{WW:pqs} built on Milne's work, giving two more inversion-like statistics on restricted growth functions with the same distribution.  There is also an analogue of the major index for these Stirling numbers as shown by Sagan~\cite{sag:mss}.

Ehrenborg and Readdy~\cite{ER:jaq} interpreted the $\Sb[n,k]$ in terms of juggling sequences and then Ehrenborg~\cite{ehr:diq} used this interpretation to evaluate various determinants whose entries are these polynomials.
Other work on $q$-Stirling  numbers of the second kind has been done by Garsia and Remmel~\cite{GR:qcr} and by Leroux~\cite{ler:rmq}.

In fact, in Carlitz's original paper~\cite{car:af}, the sum which arises in the context of Stirling numbers is actually for $\Sb^o[n,k]$ and he has to divide by $2^{\binom{k}{2}}[k]!$ to get the quantity in which he is interested.  
Zeng and Zhang~\cite{ZZ:qan}  used analytic means to prove a formula relating the $\Sb^o[n,k]$ and a $q$-analogue of the $q$-exponential numbers.
A connection between the ordered polynomials and ordered set partitions was obtained by Steingr\'{\i}msson in a 2001 preprint which was finally published in 2020~\cite{ste:sop}.   There he gave eight statistics analogous to the ones of Wachs and White.  He also made a number of conjectures and posed an open problem (to find a combinatorial proof of Zeng and Zhang's identity) which spurred a number of other authors to work on ordered set partitions~\cite{IKZ:cgf,IKZ:ems,KZ:ems,KZ:nsp,RW:eme,Wil:eme}.  See
Ishikawa, Kasraoui, and Zeng~\cite{IKZ:emss} for a survey.

Haglund, Rhoades, and Shimozono~\cite{HRS:osp} showed that there is a connection between ordered set partitions, generalized coinvariant algebras, and the Delta Conjecture (see the recent proof of the rise version \cite{DAM:delta}). In related work, Zabrocki~\cite{zab:mdc} conjectured that
the tri-graded Hilbert series of the type $A$ superdiagonal
coinvariant algebra has coefficients which are the  $S^o[n,k]$.
Swanson and Wallach~\cite{SW:hdf} made a corresponding conjecture in type $B$. This led them to conjecture that an alternating sum involving these ordered Stirling numbers equals one. We prove this as \Cref{alt:sumA}.

\subsection{Stirling numbers in type $B$ and their $q$-analogues}

The Stirling numbers in type $B$ have appeared sporadically in the literature over the last several decades. They can be defined as follows.

\begin{defn}
The {\em type $B$ Stirling numbers of the second kind} are defined by replacing \eqref{eq:S_rec} with
\begin{equation}\label{eq:SB_rec}
S_B(n,k)= S_B(n-1,k-1)+ (2k+1) S_B(n-1,k).
\end{equation}
The ordered version of $S_B(n,k)$ is
$$
S_B^o(n,k) = (2k)!! S_B(n,k).
$$
The {\em (signless) type $B$ Stirling numbers of the first kind} are defined by replacing \eqref{eq:c_rec} with
\begin{equation}\label{eq:cB_rec}
c_B(n,k) = c_B(n-1,k-1) + (2n-1) c_B(n-1,k),
\end{equation}
with the signed version being 
$$
s_B(n,k)=(-1)^{n-k} c_B(n,k).
$$
\end{defn}
The reason for calling these ``type $B$'' is because the $S_B(n,k)$ and $s_B(n, k)$ are the Whitney numbers of the second and first kind for the intersection lattice $\cL_{B_n}$ of the hyperplane arrangement of the Coxeter group $B_n$, as follows from the work of Zaslavsky~\cite{zas:grs}. See also \Cref{lsp}.

The $S_B(n,k)$ appear implicitly in the work of Dowling~\cite{dow:cgl} on certain lattices and, as already mentioned, in that of Zaslavsky~\cite{zas:grs} concerning signed graphs.  They were defined explicitly by Dolgachev and Lunts when studying representations of Weyl groups~\cite{DL:cfr} and Reiner~\cite{rei:ncp} in his work on noncrossing partitions for classical reflection groups.  An analogue of the $s_B(n,k)$ appears implicitly in a formula of Shephard and Todd~\cite{ST:fur} for the characteristic polynomial of the intersection lattice of an arbitrary finite complex reflection group. Neither they, nor their $q$-analogue defined below, have been explicitly defined elsewhere to our knowledge. See \Cref{ci} for combinatorial interpretations of $S_B(n, k)$ and $c_B(n, k)$ involving signed set partitions and signed permutations.

Our primary interest lies in the following $q$-analogues of the preceding type $B$ Stirling numbers.

\begin{defn}
The {\em type $B$ $q$-Stirling numbers of the second kind} are defined by replacing \eqref{eq:S_rec} with
\begin{equation}\label{eq:SBq_rec}
S_B[n,k]= S_B[n-1,k-1]+ [2k+1] S_B[n-1,k].
\end{equation}
The ordered version of $S_B[n,k]$ is
$$
S_B^o[n,k] = [2k]!! S_B[n,k].
$$
The {\em (signless) type $B$ Stirling numbers of the first kind} are defined by replacing \eqref{eq:c_rec} with
\begin{equation}\label{eq:cBq_rec}
c_B[n,k] = c_B[n-1,k-1] + [2n-1] c_B[n-1,k],
\end{equation}
with the signed version being 
$$
s_B[n,k]=(-1)^{n-k} c_B[n,k].
$$
\end{defn}

As far as we know, the $S_B[n,k]$ have only appeared once before (in their ordered form) in the previously cited paper of Swanson and Wallach.  We will have more to say about this in Sections~\ref{oai} and~\ref{ca}.
As we were preparing this article, we became aware that Bagno, Garber, and Komatsu~\cite{BGK:qrS} were also studying type $B$ analogues of the Stirling numbers.  Their primary tool is the use of  signed analogues of restricted growth functions (RGFs) as opposed to working  with signed partitions and permutations as we do here.  Interestingly, their type $B$ analogue of the $s(n,k)$ differs significantly. 
They also use a statistic on RGFs for $S_B[n,k]$ which, when translated into the language of signed partitions, is different from ours although the two can be related by a simple bijection.
They also consider a  variant where the elements of the interval $[r]$ appear in different blocks or cycles for some fixed $r$. 
 The overlapping results between our two papers are \Cref{cS:eh} (b) and (d), as well as \Cref{egf:thm} (a). 

\subsection{Summary of results}

Here we summarize some of our main results. See the following sections for more details, references, and additional results. In \Cref{sp}, we show

\begin{itemize}
    \item $S_B[n, k] = h_{n-k}([1], [3], \ldots, [2k+1])$ and
    $$t^n = \sum_{k=0}^n S_B[n, k] (t-[1])(t-[3])\cdots(t-[2k-1]),$$
    \item $c_B[n, k] = e_{n-k}([1], [3], \ldots, [2n-1])$ and
    $$\sum_{k=0}^n c_B[n, k] t^k = (t+[1])(t+[3])\cdots(t+[2n-1]),$$
\end{itemize}
where $h$ and $e$ are complete homogeneous and elementary symmetric polynomials.  Along the way, we prove a new identity, \Cref{t^n:x}, for the $h$ polynomials.

In \Cref{ci}, we provide combinatorial interpretations of the $S_B[n, k]$ and $c_B[n, k]$ (and hence also the case $q=1$) in terms of certain statistics on signed partitions and signed permutations, respectively; see Theorems~\ref{SB:inv}, \ref{SB:maj}, and~\ref{cB:inv}. In particular, we show that $c_B(n, k)$ counts the number of signed permutations with $k$ pairs of ``negative cycles.''

In \Cref{eegf}, we prove a variety of additional generating function identities, including the following:

\begin{align*}
    \sum_{n=0}^\infty S^o[n, k] \frac{x^n}{[n]!}
      &= \frac{1}{q^{\binom{k}{2}}}
  \sum_{i=0}^k (-1)^{k-i}q^{\binom{k-i}{2}}  \gau{k}{i} \exp_q([i] x), \\
    \sum_{n=0}^\infty S_B^o(n, k) \frac{x^n}{n!}
      &= e^x (e^{2x} - 1)^k, \\
    \sum_{n=0}^\infty S_B^o[n, k] \frac{x^n}{[n]!}
      &= \frac{1}{q^{k^2}}
  \sum_{i=0}^k (-1)^{k-i}q^{2\binom{k-i}{2}}  \gau{k}{i}_{q^2} \exp_q([2i+1] x).
\end{align*}
We also provide identities involving (bivariate) generating functions for $c[n, k]$ and $c_B[n, k]$. In each case, they satisfy a first or second order linear $q$-difference equation.

In \Cref{oai}, we prove the alternating sum identities
$$
\sum_{k=0}^n (-q)^{n-k} S^o[n, k] = 1 = \sum_{k=0}^n (-q)^{n-k} S_B^o[n, k],
$$
answering the conjecture in \cite{SW:hdf} in the affirmative. See \Cref{alt:sumA} and \Cref{alt:sumB}. We provide both algebraic demonstrations and combinatorial proofs based on sign-reversing involutions. Replacing $-q$ with $-q^m$ in the above sums conjecturally gives the graded Euler characteristic of certain generalized exterior derivative complexes on super coinvariant algebras. Our sign-reversing involutions more strongly give divisibility properties for these conjectured graded Euler characteristics. Furthermore, we conjecture bases for the type $A$ and $B$ super coinvariant algebras whose Hilbert series naturally give rise to $S^o[n, k]$ and $S_B^o[n, k]$ with our inversion-style statistics. See \Cref{ca} for details.  We end with a section of comments and many questions and open problems.

\section{Symmetric polynomials}
\label{sp}

Certain properties of the Stirling numbers follow from the fact that they can be expressed in terms of elementary and complete homogeneous symmetric polynomials.  We will derive some of them, including their ordinary generating functions, in this section.  Further information about symmetric polynomials can be found in the books of Macdonald~\cite{mac:sfh}, Sagan~\cite{sag:sg}, or Stanley~\cite{sta:ec2}.

\subsection{Ordinary generating functions}
\label{ogf}

Let $\bx=\{x_1,\ldots,x_n\}$ be a set of commuting variables.  The {\em degree $k$ elementary symmetric polynomial}, denoted $e_k(n)=e_k(x_1,\ldots,x_n)$, is the sum of all degree $k$ square-free monomials in $x_1,\ldots,x_n$.
The {\em degree $k$ complete homogeneous symmetric polynomial} $h_k(n)=h_k(x_1,\ldots,x_n)$ is the sum of all degree $k$ monomials in these variables.  For example
$$
e_2(3) = x_1 x_2 + x_1 x_3 + x_2 x_3
$$
and
$$
h_2(3) = x_1 x_2 + x_1 x_3 + x_2 x_3
+ x_1^2 + x_2^2 + x_3^2.
$$
By considering which monomials contain the variable $x_n$ and which do not, it is easy to derive the following recursions for $n\ge1$:
\beq
\label{e_n:rec}
e_k(n) = e_k(n-1) + x_n e_{k-1}(n-1)
\eeq
and
\beq
\label{h_n:rec}
h_k(n) = h_k(n-1) + x_n h_{k-1}(n).
\eeq

Let $t$ be a variable.  The following generating functions are well known and easy to prove directly from the definitions above:
\beq
\label{e_n:gf}
E_n(t):=\sum_{k=0}^n e_k(n) t^k = 
\prod_{i=1}^n (1 + x_i t)
\eeq
and
\beq
\label{h_n:gf}
H_n(t):=\sum_{k\ge0} h_k(n) t^k = 
\prod_{i=1}^n \frac{1}{1 - x_i t}.
\eeq

The next result is a type $B$ analogue of the following facts from the literature in type $A$:
\ben
\item[(a)] $c[n,k] = e_{n-k}([1],[2],\ldots,[n-1])$.
\item[(b)] $S[n,k] = h_{n-k}([1],[2],\ldots,[k])$.
\item[(c)] $\dil \sum_{k=0}^n c[n,k] t^k
=t(t+[1])(t+[2])\cdots(t+[n-1])$.
\item [(d)]  $\dil \sum_{n\ge k} S[n,k] t^n
=\frac{t^k}{(1-[1]t)(1-[2]t)\cdots(1-[k]t)}$.
\een

\bth
\label{cS:eh}
Let $n\in\bbN$ and $k\in\bbZ$.
\ben
\item[(a)] $c_B[n,k] = e_{n-k}([1],[3],\ldots,[2n-1])$.
\item[(b)] $S_B[n,k] = h_{n-k}([1],[3],\ldots,[2k+1])$.
\item[(c)] $\dil \sum_{k=0}^n c_B[n,k] t^k
=(t+[1])(t+[3])\cdots(t+[2n-1])$.
\item [(d)]  $\dil \sum_{n\ge k} S_B[n,k] t^n
=\frac{t^k}{(1-[1]t)(1-[3]t)\cdots(1-[2k+1]t)}$.
\een
\eth
\bprf
Parts (a) and (b) follow easily by comparing the recursions for the symmetric polynomials with those for the Stirlings and using induction.  Parts (c) and (d) are obtained by combining the first two parts with the generating functions~\eqref{e_n:gf} and~\eqref{h_n:gf}.
\eprf

\subsection{Falling factorials}

We will need a result about symmetric polynomials which, while not difficult to prove, we have not been able to find in the literature.  Given a variable $t$ and $k\in\bbN$ the corresponding {\em (type $A$) falling factorial} is
$$
t\da_k = t(t-1)(t-2)\cdots(t-k+1).
$$
It is well known that for the type $A$ Stirling numbers we have 
\beq
\label{t^n:A}
t^n =\sum_{k=0}^n S(n,k) t\da_k.
\eeq
To generalize this identity to symmetric polynomials in a set of variables $\bx$, let
$$
t\da_k^\bx = (t-x_1)(t-x_2)\cdots(t-x_k).
$$
\bth
\label{t^n:x}
For $n\in\bbN$,
$$
t^n = \sum_{k=0}^n h_{n-k}(k+1)\ t\da_k^\bx.
$$
\eth
\bprf
Induct on $n$, where the base case is easy to verify.  For the induction step we use equation~\eqref{h_n:rec} and pull off a factor from the falling factorial to get
\begin{align*}
\sum_k h_{n-k}(k+1)\ t\da_k^\bx
&=\sum_k h_{n-k}(k)\ (t-x_k)t\da_{k-1}^\bx +\sum_k x_{k+1} h_{n-k-1}(k+1)\ t\da_k^\bx\\
&=t\sum_k h_{n-k}(k)\ t\da_{k-1}^\bx -\sum_k x_k h_{n-k}(k)\ \da_{k-1}^\bx \\
&\rule{50pt}{0pt} +\sum_k x_k h_{n-k}(k)\ t\da_{k-1}^\bx\\
&=t\sum_k h_{(n-1)-k}(k+1) t\da_k^\bx \\
&=t\cdot t^{n-1}
\end{align*}
completing the proof.
\eprf

Clearly equation~\eqref{t^n:A} follows from the previous theorem by setting  $x_i = i-1$ for $1 \leq i \leq n+1$. More generally,
define the {\em (type $A$) $q$-falling factorial} to be
$$
(t;q)\da_k = t(t-[1])(t-[2])\cdots(t-[k-1]).
$$
Setting  $x_i = [i-1]$ in the previous theorem gives:
\begin{cor}
\label{t^n:Sq}
For $n\in\bbN$
$$
t^n = \sum_{k=0}^n S[n,k] (t;q)\da_k.
$$
\end{cor}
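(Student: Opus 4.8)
The plan is simply to specialize the variables in \Cref{t^n:x}. First I would substitute $x_i = [i-1]$ for $1 \le i \le n+1$ into the identity
\[
t^n = \sum_{k=0}^n h_{n-k}(k+1)\, t\da_k^\bx,
\]
and then identify each of the two factors appearing in the $k$-th summand with its $q$-Stirling counterpart.

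For the falling-factorial factor, note that under this substitution
\[
t\da_k^\bx = (t-[0])(t-[1])\cdots(t-[k-1]) = t(t-[1])\cdots(t-[k-1]) = (t;q)\da_k,
\]
since $[0]=0$. This already matches the left factor in the desired sum verbatim.

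For the coefficient, the substitution yields $h_{n-k}(k+1) = h_{n-k}([0],[1],\ldots,[k])$. The one place any thought is needed is that the extra argument $[0]=0$ must be discarded so that the $k+1$ arguments here align with the $k$ arguments appearing in \Cref{cS:eh}(b). This is immediate from the definition of the complete homogeneous symmetric polynomial as the sum of all monomials of a fixed degree: every monomial containing the zero variable vanishes, and every monomial not containing it is a monomial in the remaining variables, so $h_{n-k}([0],[1],\ldots,[k]) = h_{n-k}([1],\ldots,[k])$. Now \Cref{cS:eh}(b) identifies this last expression with $S[n,k]$.

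Combining the two observations term by term turns the specialized \Cref{t^n:x} into $t^n = \sum_{k=0}^n S[n,k]\,(t;q)\da_k$, as desired. There is no genuine obstacle: this is a direct corollary of the two earlier results, and the only subtlety is the elementary remark that adjoining a zero variable leaves $h_{n-k}$ unchanged.
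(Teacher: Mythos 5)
Your proof is correct and is essentially the paper's own argument: the paper obtains the corollary precisely by substituting $x_i=[i-1]$ into \Cref{t^n:x}, with the same implicit observations you spell out, namely that $[0]=0$ turns $t\da_k^\bx$ into $(t;q)\da_k$ and that adjoining the zero variable leaves $h_{n-k}$ unchanged. One small citation slip: the identity $S[n,k]=h_{n-k}([1],[2],\ldots,[k])$ is item (b) of the list of known type $A$ facts stated just \emph{before} \Cref{cS:eh}, whereas \Cref{cS:eh}(b) itself is the type $B$ statement $S_B[n,k]=h_{n-k}([1],[3],\ldots,[2k+1])$.
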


Now define the {\em type $B$ $q$-falling factorial} to be
$$
(t;q)\da_k^B = (t-[1])(t-[3])\cdots(t-[2k-1]).
$$
Setting $x_i = [2i-1]$ for $1 \leq i \leq k+1$ immediately gives an analogous type $B$ result.
\begin{cor}
\label{t^n:SBq}
For $n\in\bbN$
$$
t^n = \sum_{k=0}^n S_B[n,k] (t;q)\da_k^B,
$$
\end{cor}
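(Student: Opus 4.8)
The plan is to derive this as an immediate specialization of \Cref{t^n:x}, exactly as the sentence preceding the statement indicates. First I would take the universal polynomial identity
$$
t^n = \sum_{k=0}^n h_{n-k}(k+1)\, t\da_k^\bx
$$
in a family of variables $\bx$ and substitute $x_i = [2i-1]$ for every relevant index $i$. Since the term indexed by $k$ uses only $x_1,\ldots,x_{k+1}$ and $k$ ranges over $0,\ldots,n$, it suffices to fix the substitution on $x_1,\ldots,x_{n+1}$.

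Under this substitution the falling-factorial factor becomes
$$
t\da_k^\bx = (t-x_1)(t-x_2)\cdots(t-x_k) = (t-[1])(t-[3])\cdots(t-[2k-1]) = (t;q)\da_k^B,
$$
which is precisely the type $B$ $q$-falling factorial appearing in the statement. For the coefficient, $h_{n-k}(k+1)$ evaluates to $h_{n-k}([1],[3],\ldots,[2k+1])$, and this is exactly $S_B[n,k]$ by part (b) of \Cref{cS:eh}. Substituting both identifications term by term turns \Cref{t^n:x} into the claimed equation.

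There is essentially no obstacle here beyond routine index bookkeeping: the only point to verify is that the arguments line up correctly, namely that the $k$ variables $x_1,\ldots,x_k$ appearing in the product yield the odd $q$-integers $[1],[3],\ldots,[2k-1]$, while the $k+1$ variables feeding $h_{n-k}(k+1)$ yield $[1],[3],\ldots,[2k+1]$, which are exactly the arguments in the formula for $S_B[n,k]$. Because \Cref{t^n:x} is an identity of polynomials in arbitrary commuting variables, the specialization is valid verbatim and no convergence or positivity issue can arise, so the corollary follows at once.
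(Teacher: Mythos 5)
Your proposal is correct and is essentially identical to the paper's own proof: the paper likewise obtains the corollary by substituting $x_i = [2i-1]$ into \Cref{t^n:x} and invoking \Cref{cS:eh}(b) to recognize $h_{n-k}([1],[3],\ldots,[2k+1])$ as $S_B[n,k]$. Your additional bookkeeping (checking that the product variables give $[1],[3],\ldots,[2k-1]$ while the $h$-arguments give $[1],[3],\ldots,[2k+1]$) is exactly the routine verification the paper leaves implicit.
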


\subsection{Inverse matrices}

For our matrix identity, we need a new result about symmetric polynomials.
Given a generating function $f(t)=\sum_{n\ge0} a_n t^n$ we define the coefficient extraction function to be
$$
[t^n] f(t) = a_n.
$$
A special case of the following  can be found in the book of
Sturmfels~\cite[p.\ 12]{stu:ait}.
\bth
We have
$$
\sum_{a+b=N} (-1)^a e_a(n) h_b(m) = \begin{cases}(-1)^N e_N(x_{m+1}, \ldots, x_n) & n \geq m, \\ h_N(x_{n+1}, \ldots, x_m) & n \leq m.\end{cases}
$$
\eth
\bprf
From the generating functions in~\eqref{e_n:gf} and~\eqref{h_n:gf} we obtain
\begin{align*}
\sum_{a+b=N} (-1)^a e_a(n) h_b(m)
&=[t^N] E_n(-t)H_m(t)\\
&=[t^N] \frac{(1-x_1 t)(1-x_2t)\cdots(1-x_nt)}{(1-x_1t)(1-x_2t)\cdots(1-x_mt)}\rule{0pt}{15pt}.
\end{align*}

If $n\ge m$ then we are finding the coefficient of $t^N$ in $(1-x_{m+1}t)(1- x_{m+2}t)\cdots(1-x_nt)$ which is the desired signed elementary symmetric polynomial.  The case $n\le m$ is similar.
\eprf

Define two infinite matrices with rows and columns indexed by $n\in\bbN$ and $k\in\bbN$, respectively, by
$$
E=[(-1)^{n-k}e_{n-k}(n)]_{n,k\ge0}
$$
and
$$
H=[h_{n-k}(k+1)]_{n,k\ge0}.
$$
Note that both $E$ and $H$ are lower uni-triangular.  Let $I$ be the $\bbN\times\bbN$ identity matrix.
\bth
We have
$$
EH = I
$$
\eth
\bprf
Since $E$ and $H$ are both lower uni-triangular, so is their product.  Therefore we only need to evaluate the entry $(EH)_{n,k}$ when $n>k$.  But
$$
(EH)_{n,k} = \sum_{i\ge 0} (-1)^{n-i}e_{n-i}(n) h_{i-k}(k+1)
$$
which is the sum in the previous theorem with $N=n-k$ and $m=k+1$.  So we are in the first case and the sum equals $(-1)^{n-k} e_{n-k}(x_{k+2},\ldots,x_n)=0$ since there are only $n-k-1$ variables in the elementary symmetric polynomial.
\eprf

Specialization  of $x_i=[2i-1]$ immediately gives the following result.

\bco 
If $s_B = [s_B[n,k]]_{n,k\ge0}$ and $S_B=[S_B[n,k]]_{n,k\ge0}$ then $s_BS_B = I$.\hqed 
\eco

Note that setting $x_i=i-1$  in the previous theorem gives the well-known result that $sS=I$ where
$s = [s(n,k)]_{n,k\ge0}$ and $S=[S(n,k)]_{n,k\ge0}$.

\section{Combinatorial interpretations}
\label{ci}

We will now give combinatorial interpretations of the $S_B[n,k]$ and $c_B[n,k]$ (and hence also the case $q=1$) in terms of certain statistics on signed partitions and permutations, respectively.  The lattice of signed partitions ordered by refinement is isomorphic to the intersection lattice $\cL_{B_n}$ for the hyperplane arrangement of the Coxeter group  $B_n$.  We will rederive the fact that $s_B(n,k)$ and $S_B(n,k)$ are the Whitney numbers of the first and second kind, respectively, for $\cL_{B_n}$, as well as proving a new result expressing the M\"obius function of $\cL_{B_n}$ in terms signed permutations.

\subsection{Signed partitions}

\begin{defn}
Let $S$ be a set and $\rho=\{S_1,\ldots,S_k\}$ be a set partition of $S$, so the $S_i$ are nonempty subsets whose disjoint union in $S$. We call the $S_i$ {\em blocks} and write $\rho=S_1/\ldots/S_k\ptn S$, removing the set braces from the $S_i$ themselves.
\end{defn}

\begin{defn}
\label{B_part}
A {\em signed} or {\em type $B$ partition} is a partition of the set 
$\spn{n}$
of the form
$$
\rho=S_0/S_1/S_2/\dots/S_{2k}
$$
satisfying
\ben
\item $0\in S_0$ and if $i\in S_0$ then $-i\in S_0$, and
\item for $i\ge1$ we have $S_{2i}=-S_{2i-1}$,
\een
where $-S=\{-s\ :\ s\in S\}$.
Call the blocks $S_{2i}$ and $S_{2i-1}$ {\em paired}.
We write $\rho\ptn_B \spn{n}$ and let 
$S_B(\spn{n},k)$ denote the set of all type $B$ partitions of $\spn{n}$ with $2k+1$ blocks.  
\end{defn}

For readability, we will sometimes use an overline to represent a negative sign and group the paired blocks together separated by a forward slash, while vertical slashes separate pairs.  Finally, set braces and commas may be removed.
\begin{exa}
To illustrate, an element of $S_B(\spn{7},2)$ is
$$
\rho=0 \ol{1} 1 \ol{3} 3 \ol{6} 6 \mid \ol{4}/4 \mid 2 \ol{5} 7/\ol{2} 5 \ol{7}.
$$
\end{exa}
Note that we may write the elements of any block in any order, reverse the order of any pair, and rearrange the pairs amongst themselves without changing the type $B$ partition.

Let $|S|=\{|s|\ :\  s\in S\}$, so that $|S_{2i}|=|S_{2i-1}|$ for $i\ge1$.
For all $i$ we let
$$
m_i=\min |S_i|.
$$
We will always write signed partitions in {\em standard form} which means that
\ben
\item $m_{2i}\in S_{2i}$ for all $i$, and
\item $0=m_0<m_2<m_4<\dots<m_{2k}$.
\een
\begin{exa}
The standard form of our example type $B$ partition above is
$$
\rho=0 \ol{1} 1 \ol{3} 3 \ol{6} 6 \mid \ol{2} 5 \ol{7}/2 \ol{5} 7 \mid \ol{4}/4
$$
with $m_0=0$, $m_1=m_2=2$, and $m_3=m_4=4$.
\end{exa}

\begin{defn}
\label{invB}
An {\em inversion} of $\rho \ptn_B \spn{n}$
written in standard form is a pair
$(s,S_j)$ satisfying
\ben
\item $s\in S_i$ for some $i<j$, and
\item $s\ge m_j$.
\een
Let $\Inv\rho$ be set of  inversions of $\rho$ and $\inv\rho=\#\Inv\rho$ where the hash tag denotes cardinality.
\end{defn}

While we have not specified the order of elements within each block, this does not affect which pairs form inversions.
Note that the second condition implies that any $s$ causing an inversion must be positive. Note also that in this condition it is not actually possible for $s=m_i$ because of the partition being in standard form.  However, we include it because when considering ordered set partitions equality will be possible and such cases will need to be counted.

\begin{exa}
Continuing our example, 
$$
\Inv\rho=\{(3,S_1), (3,S_2), (6,S_1), (6, S_2), (6,S_3), (6, S_4),
(5,S_2), (5,S_3), (5,S_4), (7,S_3), (7,S_4)\}
$$
so that $\inv \rho=11$.
\end{exa}

The $S_B[n,k]$ count signed partitions by inversions. The next theorem is a type $B$ analogue of the result of Milne~\cite{mil:rgf} cited in the introduction for $S[n,k]$. See also \Cref{sco_A} for a strongly related interpretation involving the Hilbert series of a conjectural monomial basis. 

\bth
\label{SB:inv}
We have
$$
S_B[n, k]=\sum_{\rho\in S_B(\spn{n},k)} q^{\inv \rho}.
$$
\eth
\bprf
We proceed by induction on $n$ where the base case is trivial.
Given $\rho\in S_B(\spn{n},k)$ we can remove $n$ and $-n$ to obtain a new partition $\rho'$.  

If $n$ (and thus $-n$) is in a singleton block then $\rho'\in S_B(\spn{n-1},k-1)$  and there is only one way to construct $\rho$ from $\rho'$.
Furthermore, in this case the standardization condition forces $S_{2k-1}=\{-n\}$ and $S_{2k}=\{n\}$ in $\rho$.  It follows that
$\inv\rho=\inv\rho'$. So, by induction, such $\rho$ contribute $S_B[n-1,k-1]$ to the sum.

If $n$ (and thus $-n$) is in a block with other elements, then $\rho'\in S_B(\spn{n-1},k)$.  The  possible $\rho$ giving rise to a fixed $\rho'$ are obtained by inserting $n$ in one of the $2k+1$ blocks of $\rho'$.  
And if $n$ is put in block $S_i$ then this adds inversions of the form $(n,S_j)$ for all $j>i$.  
Furthermore the placement of $-n$, wherever it is forced by that of $n$, does not contribute any inversions.
So
$\inv\rho=2k-i+\inv\rho'$ where $0\le i\le 2k$.
Thus the contribution of these $\rho$ is $[2k+1] S_B[n-1,k]$ and we are done.
\eprf

\begin{defn}
The {\em descent set} of $\rho\in S_B(\spn{n},k)$ is the multiset
$$
\Des\rho =\{\!\{1^{n_1}, 2^{n_2},\dots, (2k)^{n_{2k}}\}\!\}
$$
where $n_i$ is the number of elements of $S_{i-1}$ which are greater than $m_i$. Define the {\em major index} of $\rho$ to be
$$
\maj\rho = 1\cdot n_1 + 2\cdot n_2+\cdots+2k \cdot n_{2k}.
$$
\end{defn}

Note that this convention differs from the one for descents in a permutation in the symmetric group since $i$ is the index of the block containing the smaller integer.  This could be fixed by renumbering the blocks, but then the conventions above for $S_0$ would become less natural.

\begin{exa}
In our perennial example, descents are caused by the $3$ and $6$ in $S_0$, the $5$ in $S_1$, and the $7$ in $S_2$.  Hence  $\Des\rho=\{\!\{1^2,2^1,3^1\}\!\}$ and
$\maj\rho=1\cdot 2 + 2\cdot 1 +3\cdot 1= 7$.
\end{exa}
Just as with permutations in the symmetric group, $\inv$ and $\maj$ have the same distribution over signed partitions.
\bth
\label{SB:maj}
We have
$$
S_B[n,k]=\sum_{\rho\in S_B(\spn{n},k)} q^{\maj \rho}.
$$
\eth
\bprf
We proceed as in the previous proof, keeping the same notation.  When $n$ and $-n$ are in singleton blocks, the same reasoning applies to show $\maj\rho=\maj\rho'$. So, as before, such $\rho$ contribute $S_B[n-1,k-1]$ to the sum.
Now assume that $n$ is inserted into a block of $\rho'$ and $-n$ into the companion block.
If $n$ is in $S_i$ then 
$$
\maj\rho=
\case{\maj\rho'}{if $i=2k$,}{i+1+\maj\rho'}{if $0\le i<2k$.}
$$
Thus the contribution of these $\rho$ is $[2k+1] S_B[n-1,k]$ which finishes the proof.
\eprf

It is possible to give a combinatorial proof of equation~\eqref{t^n:A} by showing that when $t$ is a positive integer both sides count the set of functions $f:[n]\ra[t]$.  We will now give a combinatorial proof of \Cref{t^n:SBq} when $q=1$ using similar ideas.

\begin{defn}
A {\em type $B$ function} is any function $f:\spn{n}\ra\spn{p}$ satisfying
\beq
\label{Bfcn}
f(-i)=-f(i)
\eeq
for all $i\in\spn{n}$. The {\em kernel of $f$} is the partition $\ker f$ of $\spn{n}$ whose blocks are the nonempty fibers $f^{-1}(j)$ for $j\in\spn{p}$.
\end{defn}

In particular, a type $B$ function satisfies $f(0)=0$, and if $f(i)=0$, then $f(-i)=0$. Hence the definition of a type $B$ function ensures that $\ker f$ is a type $B$ partition.

Define the \textit{type $B$ falling factorial} to be
$$
t\da_k^B = (t;1)\da_k^B=(t-1)(t-3)\cdots(t-2k+1).
$$
The next result is the special case $q=1$ of \Cref{t^n:SBq}.  But here we give a combinatorial proof rather than relying on symmetric functions.
We note that this result with essentially the same proof below was also given in a paper of Bagno and Garber~\cite{BG:spb}, but we include it for completeness.
\bth
For $n\in\bbN$,
$$
t^n = \sum_k S_B(n,k) t\da^B_k.
$$
\eth
\bprf
Since this is a polynomial identity, it suffices show that it holds when $t$ is an odd positive integer, say $t=2p+1$.  We claim that in this case both sides of the identity count the number of type $B$ functions $f:\spn{n}\ra\spn{p}$.

On the one hand, we can determine $f$ by first choosing $f(1),\dots,f(n)$, in which case $f(-1),\dots,f(-n)$ are determined by~\ree{Bfcn}. 
And we know $f(0)=0$.
Since $\#\spn{p}=t$, there are $t$ ways to choose each of the necessary values, for a total count of $t^n$.

Alternatively, we can construct $f$ by first picking a type $B$ partition $\rho=S_0/S_1/\dots/S_{2k}$ of $\spn{n}$ to be $
\ker f$ and then injectively mapping the blocks of $\rho$ into $\spn{p}$.  Since $0\in S_0$ we must have $f(S_0)=0$.
This leaves $t-1$ choices for $f(S_1)$.  Now $f(S_2)$ is determined by~\ree{Bfcn}.  There remains $t-3$ choices for $f(S_3)$, and so forth.
\eprf

\subsection{Signed permutations}

We now turn to the Stirling numbers of the first kind.  A {\em permutation} of a finite set $S$ is a bijection $\pi:S\ra S$.  As usual, $\pi$ can be factored into {\em cycles} $c=(a_1,a_2,\ldots,a_k)$ where $\pi(a_i)=a_{i+1}$ with subscripts taken modulo $k$.  Let $\spn{n}'=\spn{n}\setm\{0\}$.

\begin{defn}
A {\em signed} or {\em type $B$ permutation} is a permutation $\pi$ of $\spn{n}'$ satisfying
\begin{equation}
  \label{B_nPer}
  \pi(-i)=-\pi(i)
\end{equation}
for all $i\in\spn{n}'$.

This condition implies that any cycle $c$ of $\pi$ is of one of two types.
\ben
\item  If $c=(a_1,a_2,\ldots,a_k)$ does not contain both $i$ and $-i$ for any $i\in\spn{n}'$ then $\pi$ also contains $-c = (-a_1,-a_2,\ldots,-a_k)$.  We say that $c$ and $-c$ are {\em paired}.
\item If $c$ contains both $i$ and $-i$ for some $i\in\spn{n}'$ then $c$ must have the form
$$
c=(a_1,a_2,\ldots,a_k,-a_1,-a_2,\ldots,-a_k).
$$
We call such a cycle {\em unpaired}.
\een
Finally, let $c_B(\spn{n}',k)$ be the set of all $B_n$ permutations with $2k$ paired cycles.
\end{defn}

We use the same conventions when writing signed cycle decomposition as for partitions, using bars instead of negative signs and keeping paired cycles closer together than others.

\begin{exa}
In the permutation
$$
\pi = (1,\ol{3},\ol{1},3)\ (\ol{4})(4)\ (2,\ol{5},7)(\ol{2},5,\ol{7})\ (\ol{6},6)
$$
the four cycles $(\ol{4})$, $(4)$, $(2,\ol{5},7)$, and $(\ol{2},5,\ol{7})$ are paired while the cycles $(1,\ol{3},\ol{1},3)$ and $(\ol{6},6)$ are not. Since there are $k=2$  cycle pairs, $\pi\in c_B(\spn{7}',2)$.
\end{exa}

We will use the absolute value and minimum notation for permutations exactly as we did for partitions.  Our {\em standard form} for a $B_n$ permutation $\pi=c_1 c_2\ldots c_\ell$ will be to list the cycles so that the minima $m_i=\min|c_i|$ satisfy
\ben
\item $m_1\le m_2\le\ldots\le m_\ell$,
\item  if $m_i=m_{i+1}$ then $-m_i\in c_i$ and 
$m_{i+1}\in c_{i+1}$,
\item each $c_i$ is listed with its $\pm m_i$ last, with unpaired cycles ending in $-m_i$. 
\een
Putting our example permutation is standard form gives
$$
\pi = (3,1,\ol{3},\ol{1})\ (5,\ol{7},\ol{2})(\ol{5},7,2)\ (\ol{4})(4)\ (6,\ol{6}).
$$

\begin{defn}
Given $\pi\in c_B(\spn{n}',k)$ in standard form, we let $w=w_1 w_2\ldots w_{2n}$ be the word obtained by removing the parentheses from $\pi$. Define the {\em set of inversions of $\pi$} to be 
$$
\Inv \pi = \{ (i,j)\ :\ \text{$i<j$ and $w_i>|w_j|$}\}
$$
with corresponding {\em inversion number}
$$
\inv \pi = \#\Inv\pi.
$$
\end{defn}

\begin{exa}
For our running example permutation, we have
$$
w = 3,1,\ol{3},\ol{1}, 5,\ol{7},\ol{2}, \ol{5},7,2, \ol{4}, 4, 6,\ol{6}.
$$
Hence $\Inv\pi$ is the set of pairs
$$
(1, 2), (1, 4), (1, 7), (1, 10), (5, 7), (5, 10), (5, 11), (5, 12), (9, 10), (9, 11), (9, 12),(9, 13), (9, 14)
$$
and $\inv\pi = 13$.
\end{exa}

\bth
\label{cB:inv}
We have
$$
c_B[n,k]=\sum_{\pi\in c_B(\spn{n}',k)} q^{\inv\pi}.
$$
\eth
\bprf
As usual, we induct on $n$ and only give details for the induction step.  Take $\pi\in c_B(\spn{n}',k)$ and remove $n$ and $-n$ to form $\pi'$.

If $n$ and $-n$ were both fixed points, then 
$\pi'\in c_B(\spn{n-1}',k-1)$.  Furthermore, standard form forces the last two cycles of $\pi$ to be $(-n)(n)$ so that 
$\inv\pi=\inv \pi'$.  Thus permutations in this case give a contribution of $c_B[n-1,k-1]$ to the sum.

The other possibility is that $n$ and $-n$ are both in cycles of length at least two.  If these cycles are paired, then they remain paired cycles after these elements are removed.  If both elements are in the same unpaired cycle then, even if that cycle contains no other elements, upon removal $\pi'$ still has the same number of paired cycles.  So in either case $\pi'\in c_B(\spn{n-1}',k)$.

Now consider all the ways $\pm n$ can be inserted in a given $\pi'$  in this case.  Note that the position of $n$ determines the position of $-n$, and that $-n$ can never cause any inversions.
One possibility is to adjoin the cycle $(n,-n)$ to $\pi$ which must be at the right end to be in standard form.  Now $n$ causes no inversions either so $\inv\pi=\inv\pi'$.  The other possibility is that $n$ is inserted just before, and in the same cycle as, any of the $2n-2$ elements of $\pi'$.  (One must use the space before since using the space after a final element in a cycle would make the result nonstandard.)  If this element is the $i$th from the right then $\inv\pi=i+\inv\pi'$ where $1\le i\le 2n-2$.  It follows that the total contribution of this case is
$$
(q^0 + q^1 +\cdots + q^{2n-2}) c_B[n-1,k] = [2n-1]  c_B[n-1,k]
$$
which finishes the proof.
\eprf

Letting $q=1$ in the previous theorem gives the following result.

\begin{cor}\label{cB:inv_q=1}
  The number of $B_n$ permutations with $2k$ paired cycles is $c_B(n, k)$.
\end{cor}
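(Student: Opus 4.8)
The plan is to specialize \Cref{cB:inv} at $q=1$. First I would observe that substituting $q=1$ into the right-hand side sends every term $q^{\inv\pi}$ to $1$, so the sum collapses to the cardinality $\#c_B(\spn{n}',k)$. By the definition of $c_B(\spn{n}',k)$ as the set of all $B_n$ permutations with $2k$ paired cycles, this cardinality is precisely the quantity the corollary is about. Thus it remains only to check that the left-hand side $c_B[n,k]$ specializes to $c_B(n,k)$ at $q=1$.

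For that, I would verify that the defining $q$-recursion \eqref{eq:cBq_rec}, namely $c_B[n,k] = c_B[n-1,k-1] + [2n-1]\, c_B[n-1,k]$, reduces to the ordinary recursion \eqref{eq:cB_rec}, $c_B(n,k) = c_B(n-1,k-1) + (2n-1)\, c_B(n-1,k)$, upon setting $q=1$. This is immediate since $[2n-1]_q$ evaluated at $q=1$ equals $2n-1$, and the two families share the same initial condition at $n=0$. A routine induction on $n$ then gives $c_B[n,k]\big|_{q=1} = c_B(n,k)$, and combining this with the collapse of the sum completes the argument. I do not expect any genuine obstacle here: the combinatorial content lies entirely in \Cref{cB:inv}, and the passage to $q=1$ is a direct specialization of an already established identity.
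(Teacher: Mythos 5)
Your proposal is correct and takes essentially the same approach as the paper: the paper's proof consists precisely of setting $q=1$ in \Cref{cB:inv}, which is exactly what you do. Your extra verification that $c_B[n,k]\big|_{q=1} = c_B(n,k)$ by comparing the recursions \eqref{eq:cBq_rec} and \eqref{eq:cB_rec} is a routine detail the paper leaves implicit, and it is carried out correctly.
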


\subsection{The lattice of signed partitions}
\label{lsp}

We now connect the $S_B(n,k)$ and $s_B(n,k)$ with the intersection lattice for the Coxeter group $B_n$.

\begin{defn}
Let $P$ be a finite poset (partially ordered set) with a unique minimal element $\zh$.  Call $P$ {\em ranked} if, for every $x\in P$, all maximal chains from $\zh$ to $x$ have the same length.  This length is called the {\em rank of $x$} and denoted $\rk x$.  The {\em $k$th rank of $P$} is
$$
\Rk(P,k) = \{ x\in P\ :\ \rk x = k\}
$$
with corresponding {\em Whitney number of the second kind}
$$
W(P,k) = \# \Rk(P,k).
$$
The {\em (one variable)  M\"obius function} of $P$ is the function $\mu:P\ra\bbZ$ defined recursively by
$$
\sum_{x\le y} \mu(x) = \de_{\zh,y}.
$$
This is a far-reaching generalization of the M\"obius function in number theory.  See~\cite{sag:aoc} or~\cite{sta:ec1} for more details.  The {\em Whitney numbers of the first kind} for $P$ are
$$
w(P,k) = \sum_{x\in\Rk(P,k)} \mu(x).
$$
If $P$ has a unique maximal element $\oh$ then we will use the notation
$$
\mu(P) = \mu(\oh).
$$
\end{defn}

\begin{defn}
For any finite set $S$ we denote by $\Pi_S$ the lattice of all  set partitions of $S$ ordered by {\em refinement} so that $\rho\le \si$ if every block of $\rho$ is contained in some block of $\si$.
Let $\Pi_{B_n}$ denote the subposet of $\Pi_{\spn{n}}$ obtained by restricting the partial order to the type $B$ partitions.  We  call $\Pi_{B_n}$ the {\em $B_n$ partition lattice}. 
\end{defn}

The Coxeter group $B_n$ has reflecting hyperplanes $x_i=0$ and $x_i=\pm x_j$ where $i,j\in[n]$ and $x_i$ is the $i$th coordinate function.  The corresponding {\em intersection lattice}, $\cL_{B_n}$, is the set of all subspaces which are intersections of these hyperplanes ordered by reverse inclusion.   Zaslavsky~\cite{zas:grs} showed that these subspaces are in bijective correspondence with certain signed graphs which are clearly in bijection with signed set partitions of $\spn{n}$.  It easily follows that $\cL_{B_n}$ and $\Pi_{B_n}$ are isomorphic.

It will be useful to connect type $B$ partitions and permutations.  Given a signed permutation $\pi$, its {\em underlying signed set partition} is $\rho$ obtained by replacing every paired cycle by its underlying subset, and taking the union of the sets underlying all paired cycles together with $\{0\}$ to be the zero block.  The reader can verify that our recurring example $\pi$ and $\rho$ satisfy this relation.  We let $B(\rho)$ be the set of type $B$ permutations with underlying set $\rho$.
The first and third parts of the next result also follow from~\cite{zas:grs}.
\bth
\label{Pi_B_n}
Fix $n\in\bbN$ and let $\mu$ be the M\"obius function of $\Pi_{B_n}$.
\ben
\item[(a)] For $0\le k\le n$, we have $W(\Pi_{B_n},k) = S_B(n,n-k)$.
\item[(b)] If $\rho=S_0/S_1/S_2/\ldots/S_{2k}\in S_B(\spn{n},k)$ then 
\begin{align*}
 \#B(\rho)&=(\#S_0-2)!! \prod_{i=1}^k (\#S_{2i}-1)!\\
 &=(-1)^{n-k}\mu(\rho).   
\end{align*}
\item[(c)]  For $0\le k\le n$ we have $w(\Pi_{B_n},k) = s_B(n,n-k)$.
\een
\eth
\bprf
To prove (a), if $\rho$ is covered by $\si$ in $\Pi_{B_n}$, then there are two possibilities. One is that there were two pairs of blocks $S_{2i-1}, S_{2i}$ and $S_{2j-1},S_{2j}$ in $\rho$ which were replaced in $\si$ by a pair whose component blocks are unions of one block from each of the given pairs.  The other is that a pair $S_{2i-1}, S_{2i}$ was absorbed into the zero block.
Note that in either case, the total number of blocks decreases by two in passing from $\rho$ to $\si$.
Using this fact, the desired result now follows from an easy induction on $k$.

For the first equality in (b), note that, for $i\ge1$, the number of ways to turn $S_{2i}$ into a cycle is $(\#S_{2i}-1)!$.  
And once a cycle is put on $S_{2i}$, the cycle on $S_{2i-1}$ is fixed.
So, letting $\#S_0=2m+1$, it suffices to show that the number of ways to decompose $S_0\setm\{0\}$ into unpaired cycles is
$(2m-1)!!$.  But the count we seek is the number of signed permutations of $\spn{m}'$ which have no paired cycles. By \Cref{cB:inv_q=1} and \Cref{cS:eh}(a) with $q=1$, this is $c_B(m, 0) = e_m(1, 3, \ldots, 2m-1) = (2m-1)!!$.

We now prove  (c) and the second equality in (b) simultaneously by induction on $n$.  From the description of the covering relations in part (a) we see that
the interval $[\zh,\rho]$ in $\Pi_{B_n}$ is isomorphic to a product of posets, with one poset for $S_0$ and one for each pair $S_{2i-1},S_{2i}$ for $1\le i\le k$.  For $i\ge1$, the partitions contained in $S_{2i}$ form the lattice $\Pi_{S_{2i}}$.  And once a partition $\si$ of $S_{2i}$ is chosen, then $S_{2i-1}$ must be partitioned so that each block is the negative of some block of $\si$.  For the zero block, the partitions contained in $S_0$ contribute the lattice $\Pi_{B_m}$. Thus  we have the isomorphism
$$
[\zh,\rho] \iso \Pi_{B_m} \times  \Pi_{S_2}  \times \Pi_{S_4}\times  \cdots \times \Pi_{S_{2k}}
$$
and, since $\mu(P\times Q)=\mu(P)\mu(Q)$,
\beq
\label{mu(rho)}
\mu(\rho) = \mu(\Pi_{B_m}) \prod_{i=1}^{k} \mu(\Pi_{S_{2i}}).
\eeq
It is well known that $\mu(\Pi_S) =(-1)^{\#S-1}(\#S-1)!$.   And we can assume by induction that for $m<n$ we have $\mu(\Pi_{B_m})=(-1)^m (2m-1)!!$.  Plugging these values into~\eqref{mu(rho)} proves the second equality in (b) as long as $\rho<\oh$.  Furthermore, from the proof of  (a), the  $\rho$ at rank $k$ of $\Pi_{B_n}$ are exactly those signed partitions of $\spn{n}$ with $2(n-k)+1$ blocks. It follows that for $\rho<\oh$, the sum of the $\mu$ values of these partitions is exactly $(-1)^k c_B(n,n-k)=s_B(n,n-k)$.

To handle the case $\rho=\oh$, we use the definition of $\mu$ and the previous paragraph to give
$$
\mu(\oh) = -\sum_{\rho<\oh} \mu(\rho) = -\sum_{k<n} s_B(n,n-k).
$$
On the other hand, plugging in $q=1$ and $t=-1$ to \Cref{cS:eh}(c) and multiplying both sides by $(-1)^n$ gives
$$
\sum_{k=0}^n s_B(n,k) = 0.
$$
Comparing the last two displayed equations and using \Cref{cS:eh}(a) when
$q=1$ and $k=0$ yields
$$
\mu(\oh) = s_B(n,0)= (-1)^n e_n(1,3,\ldots,2n-1) = (-1)^n (2n-1)!!
$$
which finishes this case and the proof.
\eprf

\section{Exponential and $q$-exponential generating functions}
\label{eegf}

We now derive exponential and $q$-exponential generating functions for the Stirling numbers and $q$-Stirling numbers of types $A$ and $B$.  We give combinatorial proofs for the former using the theory of species.  See the book of Bergeron, Labelle, and Leroux~\cite{BLL:cst} or~\cite[Chapter 4]{sag:aoc} for more information.  For the latter, we use the theory of $q$-difference equations.
Existence and uniqueness of solutions of linear $q$-difference equations with constant coefficients is given explicitly in \cite[Thm.~2.11]{MR2963764}. A general existence and uniqueness result for $q$-difference equations is given in \cite[Thm.~2.1]{MR2963764}. Basic properties of $q$-derivatives such as their definition and the $q$-product rule are summarized in \cite[\S1.3]{MR2963764}. 

Our $q$-exponential generating function identities do not seem to be known for the Stirling numbers even for $S[n,k]$ and $s[n,k]$. Consequently, we will provide full proofs of these results in type $A$ and then just sketch whatever changes are needed for type $B$.

\subsection{Exponential generating functions}

The following identities are well known; see e.g.~\cite[\S26.8(ii)]{NIST:DLMF}, \cite[A008275]{oeis}, and \cite[A008277]{oeis}. 
\begin{align*}
\dil\sum_{n\ge0} S(n,k) \frac{x^n}{n!} &= \frac{1}{k!} (e^x-1)^k,\\
\dil \sum_{n,k\ge0} S(n,k) t^k \frac{x^n}{n!} &= e^{t(e^x-1)},\\
\dil\sum_{n\ge0} c(n,k) \frac{x^n}{n!} &= \frac{1}{k!} \left(\ln\frac{1}{1-x}\right)^k,\\
\dil \sum_{n,k\ge0} c(n,k) t^k \frac{x^n}{n!} &= \frac{1}{(1-x)^t}.
\end{align*}
The type $B$ analogues are as follows. The first is stated (without proof) in \cite[A039755]{oeis}. The last two are stated (again, without proof) in \cite[A028338]{oeis}, though effectively using \Cref{cS:eh}(c) as a definition. 
\bth
\label{egf:thm}
We have
\ben
\item[(a)] $\dil\sum_{n\ge 0} S_B(n,k) \frac{x^n}{n!}
=\frac{1}{2^k k!} e^x(e^{2x}-1)^k$.\\
\item[(b)] $\dil \sum_{k,n\ge0} S_B(n,k) t^k \frac{x^n}{n!}
= e^x \sqrt{e^{t(e^{2x}-1)}}$.\\
\item[(c)] $\dil\sum_{n\ge 0} c_B(n,k) \frac{x^n}{n!}
=\frac{1}{k!\sqrt{1-2x}} \left(\log\frac{1}{\sqrt{1-2x}}\right)^k$.\\
\item[(d)] $\dil\sum_{k,n\ge0} c_B(n,k) t^k \frac{x^n}{n!}
= \left(\frac{1}{\sqrt{1-2x}}\right)^{1+t}$.
\een
\eth
\begin{proof}
We will only prove (a) and (c) since then (b) and (d) follow by summing on $k$.

For (a) note there is a bijection between type $B$ partitions $S_0/\ldots/S_{2k}$
and ordered pairs of the form 
$(T_0,S_2/S_4/\ldots/S_{2k})$ where $T_0$ is the (possibly empty) set of positive integers in $S_0$.  This is because the other elements of $S_0$ are $-S_0\uplus\{0\}$ and $S_{2i-1}=-S_{2i}$ for all $i$.

By the conventions for the $S_{2i}$, its smallest element is positive and the others can be signed arbitrarily.  Consider the species $\cS$ such that $\cS(L)$ is all sets obtained from a set of nonempty integers $L$ by arbitrarily signing every element of $L$ except the smallest.  The number of such sets is $2^{\#L-1}$ with corresponding exponential generating function
$$
\sum_{n\ge1} 2^{n-1} \frac{x^n}{n!}=\frac{1}{2}(e^{2x}-1).
$$
So, by the Product Rule for exponential generating functions~\cite[Theorem 4.4.2(b)]{sag:aoc}, the exponential generating function for ordered $k$-tuples of such sets is $(e^{2x}-1)^k/2^k$.  We must divide by $k!$ to remove the order. For $T_0$, we use the species $\cT(L) = \{L\}$  which has exponential generating function $e^x$ since $\#\cT(L)=1$ for all $L$.  Now using the Product Rule again completes the proof of (a).

To prove (c), consider a type $B$ permutation $\pi$ and its underlying partition $\rho$.  As in part (a), we can reconstruct $\rho$ from a pair $(T_0,S_2/S_4/\ldots/S_{2k})$.  To recover the possible  $\pi$ associated with $\rho$ we must sign all but the smallest element of $S_{2i}$ and then put a cycle on these elements, where the latter can be done in $(n-1)!$ ways if $\#S_{2i}=n$.  This gives the exponential generating function
$$
\sum_{n\ge1} (n-1)! 2^{n-1} \frac{x^n}{n!}
=\frac{1}{2}\sum_{n\ge1} \frac{(2x)^n}{n} =
\log\frac{1}{\sqrt{1-2x}}.
$$

On the other hand, if $\#T_0=n$ then the  elements in $T_0\uplus (-T_0)$  need to be turned into unpaired cycles.  By \Cref{cS:eh}(a), this can be done in
$c_B(n,0) = (2n-1)!!$ ways which gives an exponential generating function of
$$
\sum_{n\ge0} (2n-1)!! \frac{x^n}{n!}
=\sum_{n\ge0}  (-1/2)\da_n (-2)^n\frac{x^n}{n!}
=\sum_{n\ge0} \binom{-1/2}{n}(-2x)^n = \frac{1}{\sqrt{1-2x}}.
$$
Using the Product Rule and dividing by $k!$ to remove the order finishes the proof.
\eprf

\subsection{$q$-exponential generating functions}
\label{qegf}

For our $q$-exponential generating functions, we will need the {\em $q$-binomial coefficients}
$$
\gau{n}{k}=\frac{[n]!}{[k]![n-k]!},
$$
the {\em $q$-exponential function}
$$
\exp_q(x) = \sum_{n\ge0} \frac{x^n}{[n]!},
$$
the {\em $q$-logarithm}
$$
-\log_q(1-x) = \sum_{n=1}^\infty \frac{x^n}{[n]},
$$
as well as the {\em $q$-derivative}
$$
D_q f(x) = \frac{f(qx)-f(x)}{qx-x}.
$$
See \cite[\S1.3]{MR2963764} for a summary of $q$-calculus. It will also be convenient to find the $q$-exponential generating functions for the ordered versions of the Stirling numbers of the second kind.
\bth
We have
\beq
\label{S^o:Egf}
  \sum_{n\ge0} S^o[n, k] \frac{x^n}{[n]!} = 
  \frac{1}{q^{\binom{k}{2}}}
  \sum_{i=0}^k (-1)^{k-i}q^{\binom{k-i}{2}}  \gau{k}{i} \exp_q([i] x),
\eeq
and
$$
     \sum_{n, k \geq 0}  S^o[n, k] t^k\frac{x^n}{[n]!} 
      = \sum_{i=0}^\infty \frac{q^i t^i}{(1+t)(q+t)\cdots (q^i+t)} \exp_q([i] x).
$$
\eth
\bprf
To simplify the proof of the first equality, let
$$
E_k = \sum_{n\ge0} S^o[n, k] \frac{x^n}{[n]!}
=\sum_{n\ge0} [k]! S[n, k] \frac{x^n}{[n]!}.
$$
From the recursion~\eqref{eq:Sq_rec} we get that
\beq\label{S^o:rec}
S^o[n,k] = [k](S^o[n-1,k-1] + S^o[n-1,k])
\eeq
for $n\ge1$.
Combining this with  the $q$-derivative
$$
D_q x^n = [n] x^{n-1}
$$
implies that
\beq
\label{D_qE_k}
D_q E_k = [k] (E_k + E_{k-1})
\eeq
for all $k\in\bbZ$  with $E_k = 0$ for $k < 0$.  This equation and induction on $k$ give
$$
\left(\prod_{i=0}^k (D_q - [i])\right) E_k = 0.
$$
Since $D_q \exp_q([i] x) = [i] \exp_q([i] x)$, the theory of linear, constant-coefficient $q$-difference equations now implies that
\beq
\label{E_k}
E_k = \sum_{i=0}^k c_i^k \exp_q([i] x)
\eeq
for certain $c_i^k$ which are constant with respect to $x$. So it suffices to show that
\beq
\label{c_i^k.0}
c_i^k = \frac{(-1)^{k-i}q^{\binom{k-i}{2}}}{q^{\binom{k}{2}}}  \gau{k}{i}.
\eeq

Substituting~\eqref{E_k} into~\eqref{D_qE_k} and using the linear independence of $\{\exp_q([i] x)\}_{i \in \bbN}$ gives
$$
c_i^k = \frac{-[k]}{q^i [k-i]} c_i^{k-1}
$$
for $0 \leq i \leq k-1$ which, after iteration, results in
\beq
\label{c_i^k}
c_i^k = \frac{(-1)^{k-i}}{q^{i(k-i)}} \gau{k}{k-i} c_i^i.
\eeq
Comparing this with~\eqref{c_i^k.0}  and using the identity 
\beq
\label{binoms}
i(k-i) + \binom{k-i}{2} = \binom{k}{2} - \binom{i}{2},
\eeq
we see that we will be done if we can show 
$c_i^i = q^{-\binom{i}{2}}$
for $i\ge0$. Let us rewrite this as $c_k^k = q^{-\binom{k}{2}}$ and induct on $k$.  Since $E_k(0)=\delta_{0,k}$, we see from~\eqref{E_k} that $c_0^0 = 1 = q^{-\binom{0}{2}}$ and, 
for $k \geq 1$,
$$
\sum_{i=0}^k c_i^k =0.
$$
This equation determines $c_k^k$ in terms of the $c_i^k$ for $i<k$.  And the latter are known to have the desired form by equation~\eqref{c_i^k} and induction.  So it suffices to prove that
$$
0=\sum_{i=0}^k  \frac{(-1)^{k-i}}{q^{i(k-i)+\binom{i}{2}}} \gau{k}{k-i}
=\frac{1}{q^{\binom{k}{2}}}\sum_{i=0}^k (-1)^{k-i} q^{\binom{k-i}{2}} \gau{k}{k-i}
=\frac{1}{q^{\binom{k}{2}}}\sum_{i=0}^k (-1)^i q^{\binom{i}{2}} \gau{k}{i}.
$$
But this last sum is seen to be zero by substituting $t=-1$ into the $q$-Binomial Theorem
\beq
\label{qBT}
\prod_{i=0}^{k-1} (1+q^i t) = \sum_{i=0}^k q^{\binom{i}{2}} \gau{k}{i} t^i.
\eeq

For the second equality in the statement of the theorem, we multiply~\eqref{S^o:Egf} by $t^k$ and sum to get
  \begin{align*}
    \sum_{n, k \geq 0} [k]! S[n, k] t^k \frac{x^n}{[n]!} 
      &= \sum_{k=0}^\infty \frac{t^k}{q^{\binom{k}{2}}}
     \sum_{i=0}^k (-1)^{k-i}q^{\binom{k-i}{2}}  \gau{k}{i} \exp_q([i] x) \\
      &= \sum_{i=0}^\infty \frac{1}{q^{\binom{i}{2}}} \left(\sum_{k=i}^\infty  \frac{(-1)^{k-i}}{q^{i(k-i)}} \gau{k}{i} t^k\right) \exp_q([i] x) \\
      &= \sum_{i=0}^\infty \frac{q^i t^i}{(1+t)(q+t) \cdots (q^i+t)} \exp_q([i] x),
  \end{align*}
  where the second equality uses equation~\eqref{binoms}, and the third is a form of the Negative $q$-Binomial Theorem as in Exercise 8(b) from Chapter 3 of~\cite{sag:aoc} substituting $-t$ for $t$ and $1/q$ for $q$.
\eprf

The proof of the next result is similar to the one just given.  So we will only mention the highlights.
\bth
We have
$$
  \sum_{n\ge0} S_B^o[n, k] \frac{x^n}{[n]!} = 
  \frac{1}{q^{k^2}}
  \sum_{i=0}^k (-1)^{k-i}q^{2\binom{k-i}{2}}  \gau{k}{i}_{q^2} \exp_q([2i+1] x),
$$
and
$$
     \sum_{n, k \geq 0}  S_B^o[n, k] t^k\frac{x^n}{[n]!} 
      = \sum_{i=0}^\infty \frac{q^{2i+1} t^i}{(q+t)(q^3+t)\cdots (q^{2i+1}+t)} \exp_q([2i+1] x).
$$
\eth
\bprf
Let $F_k$ be the first sum.  Using the recursion~\eqref{eq:SBq_rec} and the fact that 
$S_B^o[n,k]=[2k]!!S_B[n,k]$ we see that
$$
S_B^o[n,k] = [2k] S_B^o[n-1,k-1] + [2k+1] S_B^o[n-1,k]
$$
for $n\ge1$,  This in turn implies that for $k\in\bbZ$ 
\beq
\label{D_qF_k}
D_q F_k = [2k+1] F_k + [2k] F_{k-1}
\eeq
and $F_k=0$ for $k<0$.  Thus
$F_k$ satisfies the linear $q$-difference equation
$$
\left(\prod_{i=0}^k (D_q - [2i+1])\right) F_k = 0.
$$
So we can write
$$
F_k = \sum_{i=0}^k d_i^k \exp_q([2i+1] x)
$$
for certain constants $d_i^k$.  To show that the $d_i^k$ have the correct form, one uses~\eqref{D_qF_k} and iteration to get
$$
d_i^k = \frac{(-1)^{k-i}}{q^{(2i+1)(k-i)}} \gau{k}{k-i}_{q^2} d_i^i.
$$
The analogue of \eqref{binoms} is
\beq
\label{binoms_B}
(2i+1)(k-i) + i^2 = k^2 - 2\binom{k-i}{2},
\eeq
which implies that we must show $d_k^k = q^{-k^2}$. This is accomplished recursively using the $q$-Binomial Theorem~\eqref{qBT} as before with the substitutions $t=-1$ and $q^2$ for $q$. 

The bivariate generating function is now obtained by summing over $k$ and using the previous version of the Negative $q$-Binomial Theorem with substitutions $-t/q$ for $t$ and $1/q^2$ for $q$.
\eprf

For the $q$-Stirling numbers of the first kind we will need to use a version of the chain rule.  Unfortunately, no such analogue exists for $D_q(g(f))$.  But Gessel~\cite{ges:qae} defined a $q$-analogue of composition which does obey a $q$-chain rule.  Given an $q$-exponential generating function $f(x)$ with $f(0)=1$, define its {\em $k$th symbolic power} recursively by $f^{[0]}=1$ and 
$$
D_q f^{[k]} = [k] f^{[k-1]} D_q f.
$$
Note that $x^{[k]}=x^k$, so that when $q=1$ we have $f^{[k]}=f^k$
for $k\ge0$.  Given $g=\sum_{n\ge0} g_n x^n/[n]!$, Gessel then defines a {\em $q$-analogue of functional composition} to be
$$
g[f] = \sum_{n\ge0} g_n \frac{f^{[n]}}{[n]!}.
$$
Again, when $q=1$ we have $g[f]=g(f)$.  Gessel's $q$-analogue of the chain rule states that
\beq
\label{qcr}
D_q(g[f]) = (D_q g)[f]D_qf.
\eeq

The first result of the next theorem was also obtained by Johnson~\cite[(4.12)]{joh:saq}.
\bth
\label{cnk_egf}
We have
$$
    \sum_{n\ge0} c[n, k] \frac{x^n}{[n]!} = \frac{(-\log_q(1-x))^{[k]}}{[k]!},
$$
and
$$
    \sum_{n, k \geq 0} c[n, k] t^k\frac{x^n}{[n]!}  = \exp_q[-t\log_q(1-x)].
$$
\eth
\bprf
Let $C_k = \sum_{n\ge0} c[n, k] x^n/[n]!$.
The usual manipulations and the recursion~\eqref{eq:cq_rec} give the $q$-difference equation
$$
D_q C_k = \frac{C_{k-1}}{1-x}
$$
for $k\in \bbZ$ with  $C_k = 0$ for $k < 0$.  The formula for $C_k$ now follows from a simple induction on $k$ using the definition of symbolic power.  And the bivariate generating function is a consequence of the definition of $q$-composition.
\eprf

Unfortunately, for the  $c_B[n,k]$ we were only able to derive a differential equation for the desired $q$-exponential generating function.  We will have more to say about this in \Cref{qec}.

\section{Ordered analogues and identities}
\label{oai}

In this section we will prove alternating sum identities as well as divisiblity results for the ordered $q$-Stirling numbers of the second kind.  The former will prove two conjectures of Swanson and Wallach \cite{SW:hdf}.  Our main tools will be the use of sign-reversing involutions.  These results and their demonstrations are new even in type $A$ so, as in the previous section, the type $B$ proofs will only be sketched.

\subsection{Alternating sums}
\label{as}

We first need a combinatorial interpretation for the $S^o[n,k]$.  These polynomials count {\em ordered set partitions of $[n]$ into $k$ blocks} which are sequences of nonempty sets $\om= (S_1/S_2/\ldots/S_k)$ such that $\uplus_i S_i =[n]$. 
Note the use of parentheses to denote a sequence rather than a family of sets.
The set of these sequences is denoted $S^o([n],k)$.  We define the  inversion statistic exactly the same as for unordered signed partitions using \Cref{invB}, letting
\beq
\label{Inv:S^o}
\Inv\om = 
\{(s, S_j)\ :\ \text{$s\in S_i$ for some $i < j$ and $s \geq \min S_j$}\}
\eeq
and $\inv\om = \#\Inv\om$.
Using a similar proof to that of Theorem~\ref{SB:inv}, one can show the following.
\bth
\label{S^o:inv}
For $n,k\ge0$ we have

\vs{10pt}

\eqqed{S^o[n,k] =\sum_{\om\in S^o([n],k)} q^{\inv\om}.
}
\eth

\begin{defn}
We now define  the maps which will make up our involution in type $A$.  Given an ordered partition $\om=(S_1/S_2/\ldots/S_k)$, suppose $M=\max S_i$.
Say that $\om$ is {\em splittable at $M$} if $\#S_i\ge2$.  In that case the {\em splitting map} $\si_M$ is defined by
$$
\si_M(\om)=(S_1/\ldots/S_{i-1}/\{M\}/S_i-\{M\}/S_{i+1}/\ldots/S_k).
$$
We define $\om$ to be {\em mergeable at $M$} if
\begin{enumerate}
    \item $S_i=\{M\}$, and
    \item $M>\max S_{i+1}$.
\end{enumerate}
If these conditions hold then one can apply the {\em merging map} $\mu_M$ where
$$
\mu_M(\om) = (S_i/\ldots/S_{i-1}/S_i\uplus S_{i+1}/ S_{i+2}/\ldots/ S_k).
$$
\end{defn}

\begin{exa}
The ordered set partition $\om=(246/8/35/1/7)$ is splittable for $M=6$ and $5$, and
$$
\si_6(\om) = (6/24/8/35/1/7).
$$
On the other hand, $\om$ is only mergeable for $M=8$ and
$$
\mu_8(\om) = (246/358/1/7).
$$
\end{exa}
Note that if $\om$ is splittable at $M$, then $\si_M(\om)$ is mergeable at $M$ and $\mu_M\si_M(\om)=\om$.  The same statement holds with the roles of $\si_M$ and $\mu_M$ reversed.  Merge and split maps have been useful in a number of areas, including the computation of antipodes in Hopf algebras as shown by Benedetti and Sagan~\cite{BS:ai}.  We can now define the involution we will use for our first alternating sum.

\begin{defn}
\label{involA}
Define $\phi:\uplus_k S^o([n],k)\ra \uplus_k S^o([n],k)$ as follows.  Given $\om\in S^o([n],k)$,
find the largest $M$ (if any) such that $\om$ is either splitable or mergeable at $M=\max S_i$.  Note that, because of the restriction on $\#S_i$ for these two operations, it can not be both.  Let
$$
\phi(\om)=
\begin{cases}
\si_M(\om)  &\text{if $\om$ is splittable at $M$,}\\
\mu_M(\om)  &\text{if $\om$ is mergeable at $M$,}\\
\om         &\text{if no such $M$ exists.}
\end{cases}
$$
We see that $\phi$ is an involution because of the remarks at the end of the last paragraph and the fact that the largest splittable or mergeable $M$ is preserved by splitting or merging at this value.
\end{defn}

\bth
\label{alt:sumA}
For $n\ge0$, we have
$$
\sum_{k=0}^n (-q)^{n-k} S^o[n,k]=1.
$$
\eth
\begin{proof}
Define the sign of  $\om\in S^o([n],k)$ to be
\beq
\label{sgn:om}
\sgn \om=(-1)^{n-k}.
\eeq
The involution $\phi$ is sign-reversing on partitions which are not fixed since both $\si_M$ and $\mu_M$ change the number of blocks of $\om$ by $1$.  

By \Cref{S^o:inv}, we can write
\beq
\label{sgn:sum}
\sum_{k=0}^n (-q)^{n-k} S^o[n,k] 
=\sum_{\om\in\uplus_k S^o([n],k)} (\sgn\om) q^{n-k+\inv\om}.
\eeq
We claim that the terms for $\om$ and $\phi(\om)$ in equation~\eqref{sgn:sum} will cancel each other since  splitting adds one inversion and one block whereas merging removes one inversion and one block, hence $n-k+\inv$ is preserved by $\phi$.

To see that the claim holds when $\phi$ applies $\si_M$, note that
since $M$ is not the minimum of its block all the inversions of $\om$ will still be inversions of $\phi(\om)$.  Furthermore, if $M=\max S_i$ then splitting off $M$ will cause a new inversion $(M,S_i-\{M\})$.  Thus $\inv\phi(\om) =\inv\om + 1$.
Since $\phi$ is an involution, it also follows that
$\inv\phi(\om) =\inv\om - 1$. 

To complete the proof, it suffices to show that $\om_0=(1,2,\ldots,n)$ is a fixed point of $\phi$ and the only one since its contribution to~\eqref{sgn:sum} is $1$.  Clearly $\om_0$ is fixed since it has no blocks of size at least $2$ and its block maxima are in increasing order.  Conversely, if $\om$ is a fixed point, then it can have no blocks of size $2$ since then one could apply $\si_M$.  So $\om$ is a sequence of singleton blocks with increasing elements since otherwise $\mu_M$ could be applied.  The sequence $\om_0$ is the only ordered partition with these two properties.  This finishes the proof.
\end{proof}

To prove the type $B$ analogue of the previous theorem, we proceed in a similar manner.
An {\em ordered signed partition} of $\spn{n}$ is a sequence
$\om=(S_0/S_1/S_2/\ldots/S_{2k})$ satisfying conditions (1) and (2) in \Cref{B_part}.  Note that no assumption is made about standard form. 
The set of such partitions with $2k+1$ blocks is denoted $S_B^o(\spn{n},k)$.

The definition of inversion in \Cref{invB} remains unchanged.  But now it is possible to have an inversion where  $s=m_j$
if $m_j\in S_{j-1}$ and $-m_j\in S_j$.  The usual arguments give us the following result.
\bth
\label{S_B^o:inv}
For $n,k\ge0$ we have

\vs{10pt}

\eqqed{S_B^o[n,k] =\sum_{\om\in S_B^o(\spn{n},k)} q^{\inv\om}.
}
\eth

\begin{defn}
The {\em splitting} and {\em merging maps} have two cases in type $B$.  Consider $\om=(S_0/\ldots/S_{2k})$ and $M>0$ which is in a block with at least two elements.  
If $M=\max S_{2i-1}$ for some $i$, then $\si_M(\om)$ is the ordered signed partition formed by removing $M$ and $-M$ from their blocks and adding a block pair $-M/M$ immediately to the left of what remains of $S_{2i-1}$.
If $M=\max S_{2i}$ for some $i$, then $\si_M(\om)$ is  obtained by removing $M$ and $-M$ from their blocks (which will be the same if $i=0$) and adding a block pair $M/{-M}$ immediately to the right of the remains of $S_{2i}$.

Now suppose that $M$ is in a singleton block, which implies that the same is true of $-M$. If the block pair is $S_{2i-1}/S_{2i}=-M/M$ and $M>\max|S_{2i+1}|$ then add $M$ to $S_{2i+1}$ and $-M$ to $S_{2i+2}$ to form $\mu_M(\om)$.
If the block pair is $S_{2i-1}/S_{2i}=M/{-M}$ and $M>\max|S_{2i-2}|$ then $\mu_M(\om)$ is obtained by adding $M$ to $S_{2i-2}$ and $-M$ to the same block if $2i-2=0$ or to the block to its left otherwise.
\end{defn}

\begin{exa}
Here are examples of splitting (the forward arrows) and merging (the reverse arrows) to illustrate all of the possible cases.  
\begin{align*}
  (\overline{4}\overline{1}014 \mid 2\overline{3}/\overline{2}3)
    &\stackrel{M=4}{\longleftrightarrow}
    (\overline{1}01 \mid 4/\overline{4} \mid 2\overline{3}/\overline{2}3), \\
  (\overline{5}\overline{4}045 \mid \overline{2}3\overline{6}/2\overline{3}6 \mid 1/\overline{1})
    &\stackrel{M=6}{\longleftrightarrow}
    (\overline{5}\overline{4}045 \mid \overline{2}3/2\ol{3} \mid 6/\ol{6} \mid 1/\ol{1}), \\
  (\overline{5}\overline{4}045 \mid \overline{2}36/2\overline{3}\overline{6} \mid 1/\overline{1})
    &\stackrel{M=6}{\longleftrightarrow} 
    (\ol{5}\ol{4}045 \mid \ol{6}/6 \mid \overline{2}3/2\overline{3} \mid 1/\overline{1}). \\
\end{align*}
\end{exa}

The map $\phi:\uplus_k S_B^o(\spn{n},k)\ra \uplus_k S_B^o(\spn{n},k)$ is defined exactly as in \Cref{involA} for the type $A$ case, merely substituting the signed splitting and merging maps.  As in the previous case, it is easy to see that $\phi$ is an involution.

\bth
\label{alt:sumB}
For $n\ge0$ we have

\vs{10pt}

\eqqed{
\sum_{k=0}^n (-q)^{n-k} S^o_B[n,k]=1.
}
\eth
\bprf
A sign is assigned to $\om\in S_B^o(\spn{n},k)$ using \eqref{sgn:om} again.  Now the proof continues in much the same manner as that of \Cref{alt:sumA} using the previous theorem in place of Theorem~\ref{S^o:inv} and with unique fixed point
$\om_0=(0,\ol{1},1,\ol{2},2,\ldots,\ol{n},n)$.
\eprf

Note that Theorems~\ref{alt:sumA} and~\ref{alt:sumB}  can be given algebraic  proofs by setting $t=1$ and  then substituting $x_i = -q[i-1]$ or $x_i=-q[2i-1]$, respectively, in \Cref{t^n:x}.

\subsection{Divisibility}

The following two results are about divisibility in the ring $\bbZ[q]$.
They are analogues of \Cref{alt:sumA} and \Cref{alt:sumB} for larger powers of $q$.  Algebraically, the results here follow immediately from the corresponding ones in the previous subsection by using the fact that
$$
q^{m(n-k)}\Cong\ q^{n-k}\ (\Mod q^m-q)
$$
for $m\ge1$ and $n>k$.
We will also show how they can be proved combinatorially using the sign-reversing involutions already developed.  
\begin{thm}
\label{div:sumA}
For $m\ge1$ and $n\ge0$ we have
$$
\sum_{k=0}^n (-1)^{n-k} q^{m(n-k)} S^o[n,k]\Cong 1\ (\Mod q^m-q).
$$
\end{thm}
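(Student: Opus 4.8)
The plan is to give two proofs, mirroring the structure used for \Cref{alt:sumA}. The algebraic proof is immediate from the remark preceding the statement: since $q^m \equiv q \pmod{q^m - q}$, raising both sides to the $(n-k)$th power gives $q^{m(n-k)} \equiv q^{n-k} \pmod{q^m - q}$, so that
$$\sum_{k=0}^n (-1)^{n-k} q^{m(n-k)} S^o[n,k] \equiv \sum_{k=0}^n (-q)^{n-k} S^o[n,k] \pmod{q^m - q},$$
and the right-hand side equals $1$ by \Cref{alt:sumA}. The substance is therefore in the combinatorial proof, which I would base on exactly the same sign-reversing involution $\phi$ from \Cref{involA}.

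Next I would expand the left-hand side using \Cref{S^o:inv}, writing
$$\sum_{k=0}^n (-1)^{n-k} q^{m(n-k)} S^o[n,k] = \sum_{\om \in \uplus_k S^o([n],k)} (-1)^{n-k} q^{m(n-k) + \inv \om},$$
where $k = k(\om)$ is the number of blocks of $\om$. The key facts, both established in the proof of \Cref{alt:sumA}, are that $\phi$ pairs up all ordered set partitions except the single fixed point $\om_0 = (1,2,\ldots,n)$, and that applying the splitting map simultaneously increases the block count by one and the inversion number by one.

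The crux of the argument is to show that each non-fixed pair contributes a multiple of $q^m - q$. Suppose $\om$ has $k$ blocks and $\phi(\om)$ has $k+1$ blocks, so that $\inv \phi(\om) = \inv \om + 1$ and necessarily $k \leq n-1$. Then the combined contribution of the pair is
$$(-1)^{n-k} q^{m(n-k) + \inv \om} + (-1)^{n-k-1} q^{m(n-k-1) + \inv \om + 1} = (-1)^{n-k} q^{m(n-k-1) + \inv \om} (q^m - q),$$
which is divisible by $q^m - q$; note that $n-k-1 \geq 0$, so the monomial factored out is a genuine polynomial. Hence every paired term vanishes modulo $q^m - q$, and only $\om_0$ survives, contributing $(-1)^0 q^{m\cdot 0 + 0} = 1$. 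I do not anticipate a serious obstacle here: the only point requiring care is verifying that the correlated jumps $\Delta k = \Delta \inv = 1$ under $\phi$ produce exactly the factor $q^m - q$ after pulling out the common monomial, which is precisely the displayed computation above. (Contrast this with \Cref{alt:sumA}, where for $m=1$ the paired terms cancel outright rather than merely vanishing modulo $q^m-q$.)
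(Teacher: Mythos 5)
Your proposal is correct and follows essentially the same route as the paper: the algebraic reduction via $q^{m(n-k)}\equiv q^{n-k}\ (\Mod q^m-q)$ is precisely the remark the paper makes just before the theorem, and the combinatorial argument uses the same sign-reversing involution $\phi$ from \Cref{involA}, the same expansion via \Cref{S^o:inv}, and the same unique fixed point $(1,2,\ldots,n)$. The only difference is cosmetic bookkeeping in the endgame: the paper factors the modulus as $q^m-q=q(q^{m-1}-1)$, getting divisibility by $q$ from the constant term and divisibility by $q^{m-1}-1$ from the pair computation done on the merging side, whereas you factor the smaller monomial out of the splitting-side pair and obtain the factor $q^m-q$ in one stroke -- a slightly tidier packaging of the same cancellation.
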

\begin{proof}
Let $\Si$ be the sum under consideration.  Then clearly $\Si$ has constant term $1$ so that $\Si-1$ is divisible by $q$.  We must also show that it is divisible by $q^{m-1}-1$.

By \Cref{S^o:inv}, we can write
$$
\Si 
=\sum_{\om\in\uplus_k S^o([n],k)} (\sgn\om) q^{m(n-k)+\inv\om}
$$
where the sign is given by~\eqref{sgn:om}.
Recall that $\om=(1,2,\ldots,n)$ is the only fixed point of the involution $\phi$ on $S^o([n],k)$ and that its contribution to the previous sum is $1$.
So we need only modify the demonstration   of \Cref{alt:sumA}  by showing that, for non-fixed points $\om\in S^o([n],k)$ of $\phi$, the sum of the contributions of $\om$ and $\phi(\om)$ is divisible by $q^{m-1}-1$.

We will just give details when $\phi$ applies $\mu_M$.  But then $\phi(\om)$ has $k-1$ blocks and, as proved in the proof  of \Cref{alt:sumA}, $\inv\phi(\om)=\inv\om-1$.  So, up to sign, the contribution of these two ordered set partitions is
$$
q^{m(n-k)+\inv\om} - q^{m(n-k+1)+\inv\om-1}=
(1-q^{m-1})q^{m(n-k)+\inv\om}
$$
as desired.
\end{proof}

The type $B$ analogue of the previous result is obtained by modifying the proof of \Cref{alt:sumB} similarly to how we just modified the demonstration of \Cref{alt:sumA}.  So we omit the details.
\bth
\label{div:sumB}
For $n\ge0$ we have

\vs{10pt}

\eqqed{
\sum_{k=0}^n (-1)^{n-k}q^{m(n-k)} S^o_B[n,k] \Cong 1\ (\Mod q^m-q).
}
\eth

\section{Coinvariant algebras}
\label{ca}

In this section we will propose analogues of the Artin basis for certain super coinvariant algebras in types $A$ and $B$.  If these sets can be shown to be bases, then it will follow that the corresponding bigraded Hilbert series can be expressed in terms of ordered $q$-Stirling numbers. Since our bases are new even in type $A$, we will deal with that case first and then move on to type $B$.

\subsection{Type $A$ coinvariants}
\label{tA}

Consider the {\em $k$th power sum symmetric polynomial}
$$
p_k(n) = x_1^k + x_2^k +\cdots + x_n^k.
$$

\begin{defn}
The {\em type $A$ coinvariant algebra} is the finite-dimensional  commutative algebra
$$
\R_n = \frac{\bbQ[x_1,\ldots,x_n]}{\spn{p_k(n)\ :\ k\in[n]}}
$$
where $\bbQ$ is the rational numbers.  This algebra is graded by degree and we let $(\R_n)_d$ denote the $d$th graded piece.  We will not make a distinction in our notation between a polynomial in $\bbQ[x_1,\ldots,x_n]$ and its representative in $\R_n$.
\end{defn}

There is a standard basis for $\R_n$.  We will use it as a model for our bases in the super coinvariant algebras we consider.

\begin{defn}
The {\em Artin basis} for $\R_n$ is
$$
\cA_n =\{x_1^{m_1} x_2^{m_2} \cdots x_n^{m_n}\ :\ \text{$0\le m_i\le i-1$ for $i\in[n]$}\}.
$$
\end{defn}

The next result follows immediately from the fact that $\cA_n$ is a basis for $\R_n$. See \Cref{SW:hdf} for further history and details.
\bth
The coinvariant algebra $\R_n$ has Hilbert series

\vs{10pt}

\eqqed{
\Hilb(\R_n;q):=\sum_{d\ge0} \dim (\R_n)_d\ q^d = [n]!.
}
\eth

There is an alternative description of $\cA_n$ in terms of compositions which will be useful in the sequel.  A {\em weak composition of $d$ with $n$ parts} is a sequence of nonnegative integers $\al=(\al_1,\ldots,\al_n)$ where $|\al|:=\sum_i \al_i = d$.  The {\em  diagram} of $\al$ consists of $n$  columns lying on the same line with $\al_i$ boxes in column $i$ for $i\in[n]$.  See the diagram on the left in \Cref{stair} for an example. 
We will also use the partial order on compositions with $n$ parts given by $\al\le\be$ if $\al_i\le\be_i$ for all $i\in[n]$, equivalently, if the diagram of $\al$ is contained in the diagram of $\be$.  This relation is also illustrated in \Cref{stair}. Every composition $\al=(\al_1,\ldots,\al_n)$ has an associated monomial
$$
\bx^\al = x_1^{\al_1} x_2^{\al_2}\cdots x_n^{\al_n}
$$
of degree $d=|\al|$.  The Artin basis can be described as
$$
\cA_n=\{ \bx^\al\ :\ \al\le(0,1,\ldots,n-1)\}.
$$
We call $(0,1,\ldots,n-1)$ the {\em staircase}.

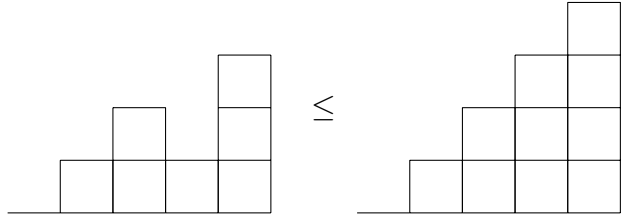
\begin{figure}
\bce
\begin{tikzpicture}[scale=.7]
\draw (0,0)--(1,0);
\draw (1,0) grid (2,1);
\draw (2,0) grid (3,2);
\draw (3,0) grid (4,1);
\draw (4,0) grid (5,3);
\draw (6,2) node{$\le$};
\end{tikzpicture}
\begin{tikzpicture}[scale=.7]
\draw (0,0)--(1,0);
\draw (1,0) grid (2,1);
\draw (2,0) grid (3,2);
\draw (3,0) grid (4,3);
\draw (4,0) grid (5,4);
\end{tikzpicture}
\ece
    \caption{The diagram of $\al=(0,1,2,1,3)$ contained in the staircase $(0,1,2,3,4)$}
    \label{stair}
\end{figure}

There is a third description of $\cA_n$ involving permutations in the symmetric group $\fS_n$.  As usual, an {\em inversion} of a permutation $\pi=\pi_1\ldots\pi_n$ written in one-line notation is a pair $(i,j)$ with $i<j$ and $\pi_i>\pi_j$.  Let
$$
\inv_i \pi = \#\{j\ :\ \text{$(i,j)$ is an inversion of $\pi$}\}.
$$
The {\em inversion composition} of $\pi\in\fS_n$ is
$$
I(\pi) = (\inv_1(\pi),\inv_2(\pi),\ldots,\inv_n(\pi)).
$$
Clearly $I(\pi)\le (0,1,\ldots,n-1)$.  In fact, it is well-known and easy to prove that the map
$$
I:\fS_n \ra \{\al\ :\ \al\le (0,1,\ldots,n-1)\}
$$
is a bijection.  It follows that
$$
\cA_n = \{\bx^{I(\pi)}\ :\ \pi\in\fS_n\}.
$$

\subsection{Type $A$ super coinvariants}\label{sco_A}

We now turn our attention to super coinvariant algebras.  Let $\th_1,\ldots,\th_n$ be anticommuting variables so that
$$
\th_i \th_j = -\th_j \th_i
$$
for all $i,j\in[n]$.  Note that because of anticommutivity we have 
\beq
\label{th^2}
\th_i^2=0
\eeq
for all $i\in[n]$.  We also assume that the $\th_i$ and $x_j$ commute with each other.   Define the {\em $k$th super power sum polynomial} to be
$$
sp_k(n) = x_1^k\th_1 + x_2^k\th_2 + \cdots + x_n^k\th_n.
$$

\begin{defn}
The {\em type $A$ super coinvariant algebra} is the finite-dimensional  algebra
$$
\SR_n = \frac{\bbQ[x_1,\ldots,x_n,\th_1,\ldots,\th_n]}{\spn{p_k(n),\ sp_{k-1}(n)\ :\ k\in[n]}}.
$$
This algebra is bi-graded where we let $(\SR_n)_{d,e}$ denote the graded piece
with monomials which are of degree $d$ is the $x$'s and degree $e$ in the $\th$'s. 
\end{defn}

Zabrocki~\cite{zab:mdc} has conjectured a description for the tri-graded Frobenius characteristic of the super-diagonal coinvariant algebra of $\fS_n$ involving two sets of commuting and one set of anti-commuting variables. Specialized to $\SR_n$, it becomes the following which explains our interest in this algebra.
\bcon
\label{SR_n:con}
We have
$$
\Hilb(\SR_n;q,t):=\sum_{d,e\ge0} \dim (\SR_n)_{d,e}\ q^d t^e 
= \sum_{k\ge0} S^o[n,k] t^{n-k}.
$$
\econ

We will now propose an analogue of the Artin basis for $\SR_n$.  Note that by~\eqref{th^2}, nonzero $\th$ monomials must contain at most one copy of each $\th_i$.  So such monomials are indexed by subsets $T\sbe[n]$ and we let
$$
\th_T = \th_{t_1}\th_{t_2}\cdots \th_{t_k}
$$
where $T=\{ t_1<t_2<\cdots<t_k\}$.  Letting
$$
[a,b]=\{a,a+1,\ldots,b\}
$$
for $a,b\in\bbZ$, we will only need $T\sbe[2,n]$ for our proposed basis.  Given such a subset we define the {\em $\al$-sequence of $T$}, $\al(T)$, to be the composition constructed recursively by letting $\al_1(T)=0$ and, for $i\in[2,n]$,
$$
\al_i(T) = \al_{i-1}(T) + \case{0}{if $i \in T$,}{1}{if $i \not\in T$.} 
$$
The diagram of an example of $\al(T)$ will be found in Figure~\ref{al(T):fig}.

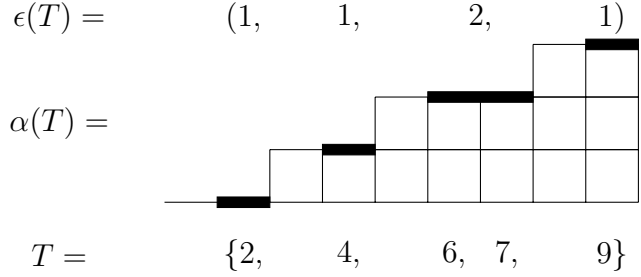
\begin{figure}
\bce
\begin{tikzpicture}[scale=.7]
\draw(-2,1.5) node{$\al(T)=$};
\draw(-2,3.5) node{$\ep(T)=$};
\draw(1.5,3.5) node{$(1,$};
\draw(3.5,3.5) node{$1,$};
\draw(6,3.5) node{$2,$};
\draw(8.5,3.5) node{$1)$};
\draw (0,0)--(2,0);
\draw[fill] (1,-.1) rectangle (2,.1);
\draw (2,0) grid (9,1);
\draw[fill] (3,.9) rectangle (4,1.1);
\draw (4,1) grid (9,2);
\draw[fill] (5,1.9) rectangle (7,2.1);
\draw (7,2) grid (9,3);
\draw[fill] (8,2.9) rectangle (9,3.1);
\draw(-2,-1) node{$T=$};
\draw(1.5,-1) node{$\{2,$};
\draw(3.5,-1) node{$4,$};
\draw(5.5,-1) node{$6,$};
\draw(6.5,-1) node{$7,$};
\draw(8.5,-1) node{$9\}$};
\end{tikzpicture}
\ece
    \caption{The compositions $\al(T)$ and $\ep(T)$ when $n=9$ and $T=\{2,4,6,7,9\}$}
    \label{al(T):fig}
\end{figure}

\begin{defn}
\label{SA_n:def}
The {\em super Artin set} for $\SR_n$ is
$$
\cS\cA_n =\{x^\al \th_T\ :\ \text{$T\sbe[2,n]$ and $\al\le\al(T)$}\}.
$$
\end{defn}

A different description of this set was independently discovered by a group of mathematicians associated with the Fields Institute including Nantel Bergeron, Shu Xiao Li, John Machachek, Robin Sulzgr\"uber, and Mike Zabrocki~\cite{zab:pc}.

\bcon
\label{SA_n:con}
The set $\cS\cA_n$ is a basis for $\SR_n$.
\econ
This conjecture has been verified using Macaulay2~\cite{GS:M2} for $n \leq 6$. While the classical Artin basis is the standard monomial basis for any monomial order with $x_1 > x_2 > \cdots > x_n$ (see \cite[\S5]{SW:hdf}), calculations by the Fields Institute group  have shown that  under reasonable assumptions
the same cannot be true of $\cS\cA_n$.

It will be convenient to think of $\al(T)$ as an elongated version of the staircase.  For example, the composition in \Cref{al(T):fig} is the staircase $(0,1,2,3)$ stretched out by adding a column of length $0$, a column of length $1$, two columns of length $2$, and a column of length $3$.  Formally, suppose $T\sbe[2,n]$ with $\#T=n-k$.  We then define the associated {\em expansion composition}
$\ep(T) = (\ep_1(T),\ldots,\ep_k(T))$ where
$$
\ep_j(T) = \#\{i\in T\ :\ \al_i(T) = j-1\}.
$$
Returning to our example, we have $\ep(T)=(1,1,2,1)$ corresponding to the bold line segments in the diagram for $\al(T)$.  Note that $|\ep(T)|=n-k$.
\bpr
\Cref{SA_n:con} implies \Cref{SR_n:con}.
\epr
\begin{proof}
Assume that $\cS\cA_n$ is a basis for $\SR_n$.  For fixed $T\sbe[2,n]$ with $\#T=n-k$, the description of the expansion composition $\ep(T)=(\ep_1,\ep_2,\ldots,\ep_k)$ shows that the monomials in $\cS\cA_n$ whose theta component is $\th_T$ contribute
$$
[1]^{\ep_1+1}[2]^{\ep_2+1}\cdots [k]^{\ep_k+1}
=[k]! [1]^{\ep_1}[2]^{\ep_2}\cdots [k]^{\ep_k}
$$
to the $q$-grading in $\Hilb(\SR_n;q,t)$.  Summing over all such $T$ gives a contribution of
$$
[k]! h_{n-k}([1],[2],\ldots,[k])=S^o[n,k].
$$
Since $\th_T$ has degree $t^{n-k}$ for these $T$, the proof is complete.
\end{proof}

There is also a way to express the elements of $\cSA_n$ by using inversions in type $A$ ordered set partitions $\om=(S_1/S_2/\ldots)$ of $[n]$, for which we write $\om\comp[n]$.  Given $s\in[n]$ we define
\beq
\label{inv_s}
\inv_s\om = \#\{S_j\ :\ (s,S_j)\in \Inv\om\}
\eeq
where $\Inv\om$ is defined by~\eqref{Inv:S^o}.  From this we get the {\em inversion composition}
\beq
\label{eta}
\eta(\om)=(\inv_1\om,\inv_2\om,\ldots,\inv_n\om).
\eeq
We also need the set
\beq
\label{T(om)}
T(\om) 
= \{ t\in[n]\ :\ \text{$t\in S_i$ for some $i$ and $t>m_i$}\}
\eeq
where $m_i=\min|S_i|$.  Note that the absolute value is not needed here since all elements of $S_i$ are positive.
Also, it is impossible for $1\in T(\om)$.  But this description will permit us to use the exactly the same definition in type $B$ where the absolute value is needed and $1\in T(\om)$ is possible.
For example, if $\om=(S_1/S_2/S_3)=(25/136/4)$ then $\inv_2\om = 1$ because of $S_2$, $\inv_5\om=2$ because of $S_2$ and $S_3$,
$\inv_6\om=1$ because of $S_3$, and $\inv_s\om=0$ for all other values of $s$, so that $\eta(\om)=(0,1,0,0,2,1)$.  Furthermore, $T(\om)=\{3,5,6\}$.
\bpr
We have
$$
\cSA_n =\{\bx^{\eta(\om)} \th_{T(\om)}\ :\ \om\comp[n]\}.
$$
\epr
\bprf
It suffices to define a weight-preserving bijection  from the pairs $(T,\al)$ appearing in~\Cref{SA_n:def} to the $\om\comp[n]$.  Given $(T,\al)$ we construct $\om$ inductively as follows.  We start with $\om=(1)$ and insert the numbers $2,3,\ldots,n$ in order according to the following rules when it comes to inserting $k$.
\ben
\item  If $k\in T$ then put $k$ in the existing block of $\om$ so that  exactly $\al_k$ new inversions result.
\item If $k\not\in T$ then make $k$ a new block of $\om$ so that exactly $\al_k$ new inversions result.
\een
It is routine to verify that this is a well defined map and to describe its inverse, so those details are left to the reader.
\eprf

\begin{exa}
The reader will note how the proof just given mirrors the standard combinatorial demonstration that
$$
\sum_{\pi\in\fS_n} q^{\inv\pi} = [q]!.
$$
To illustrate the construction in the proof of this result suppose that $n=5$, $T=\{3,5\}$, and $\al=(0,1,0,2,1)$.  The sequence of ordered partitions constructed is
$$
(1),\ (2/1),\ (2/13),\ (4/2/13),\ (4/25/13).
$$
For example, when $4$ is inserted  into $(2/13)$, then, since $4\not\in T$, it will appear as a singleton block.  And since $\al_4=2$ it must be the first block to cause two inversions.  Similarly, when $5$ is inserted then $5\in T$ forces this element into one of the existing blocks.  And if $5$ is to cause one new inversion then it must be in the second block from the right. The $\alpha$-sequence of $T$ is simply the sequence of the maximal number of inversions one could possibly cause at each step. 
\end{exa}

Our $\inv$ statistic, or equivalently Steingr\'imsson's $\operatorname{ros}$ \cite{ste:sop}, effectively numbers the possible insertion positions ``from right to left'' starting at $0$. One may get equidistributed variations on the $\inv$ statistic by changing this numbering scheme. Using the left-to-right order yields Steingr\'imsson's $\operatorname{los}$ \cite{ste:sop}, or equivalently (in the unordered case) Cai--Readdy's $\operatorname{wgt}$ \cite{CR:qStir} which is Wachs--White's $\operatorname{ls}$ up to a $q$-shift \cite{WW:pqs}.

\subsection{Type $B$ super coinvariants}
\label{tB}

We now consider coinvariant algebras in type $B$.
\begin{defn}
The {\em type $B$ coinvariant algebra} is the finite-dimensional, graded,  commutative algebra
$$
\RB_n = \frac{\bbQ[x_1,\ldots,x_n]}{\spn{p_{2k}(n)\ :\ k\in[n]}}.
$$
\end{defn}

The analogue of the Artin basis in this context is as follows.
\begin{defn}
The {\em type $B$ Artin basis} for $\RB_n$ is
$$
\cB_n =\{\bx^\al\ :\ \al\le(1,3,\ldots,2n-1)\}.
$$
\end{defn}

\begin{figure}
\bce
\begin{tikzpicture}[scale=.7]
\draw (0,0) grid (1,1);
\draw (1,0) grid (2,3);
\draw (2,0) grid (3,5);
\draw (3,0) grid (4,7);
\end{tikzpicture}
\ece
    \caption{The double staircase $(1,3,5,7)$}
    \label{dstair}
\end{figure}
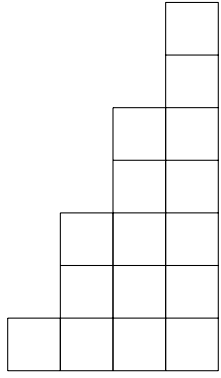

We call the composition $(1,3,\ldots,2n-1)$ the {\em double staircase} and it is displayed in \Cref{dstair} for $n=4$.
Again, the Artin basis trivializes the computation of the Hilbert series.
\bth
The coinvariant algebra $\RB_n$ has Hilbert series

\vs{10pt}

\eqqed{
\Hilb(\RB_n;q)= [2n]!!.
}
\eth

Swanson and Wallach~\cite{SW:hdf} considered the type $B$ super coinvariant algebra where one adds anticommuting variables $\th_1,\ldots,\th_n$ which again commute with the $x_j$'s.
\begin{defn}
The {\em type $B$ super coinvariant algebra} is the finite-dimensional, bigraded,  algebra
$$
\SRB_n = \frac{\bbQ[x_1,\ldots,x_n]}{\spn{p_{2k}(n),\ sp_{2k-1}(n)\ :\ k\in[n]}}.
$$
\end{defn}

As for the super coinvariant algebra in type $A$, the Hilbert series is only conjectural.
\bcon[\cite{SW:hdf}]
\label{SRB_n:con}
We have
$$
\Hilb(\SRB_n;q,t) = \sum_{k\ge0} S_B^o[n,k] t^{n-k}.
$$
\econ

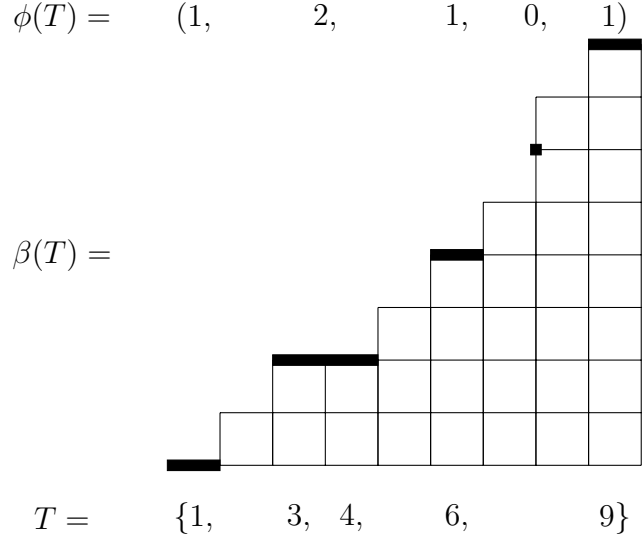
\begin{figure}
\bce
\begin{tikzpicture}[scale=.7]
\draw(-2,4) node{$\be(T)=$};
\draw(-2,8.5) node{$\phi(T)=$};
\draw(.5,8.5) node{$(1,$};
\draw(3,8.5) node{$2,$};
\draw(5.5,8.5) node{$1,$};
\draw(7,8.5) node{$0,$};
\draw(8.5,8.5) node{$1)$};
\draw[fill] (0,-.1) rectangle (1,.1);
\draw (1,0) grid (2,1);
\draw (2,0) grid (4,2);
\draw[fill] (2,1.9) rectangle (4,2.1);
\draw (4,0) grid (5,3);
\draw (5,0) grid (6,4);
\draw[fill] (5,3.9) rectangle (6,4.1);
\draw (6,0) grid (7,5);
\draw (7,0) grid (8,7);
\draw[fill] (6.9,5.9) rectangle (7.1,6.1);
\draw (8,0) grid (9,8);
\draw[fill] (8,7.9) rectangle (9,8.1);
\draw(-2,-1) node{$T=$};
\draw(.5,-1) node{$\{1,$};
\draw(2.5,-1) node{$3,$};
\draw(3.5,-1) node{$4,$};
\draw(5.5,-1) node{$6,$};
\draw(8.5,-1) node{$9\}$};
\end{tikzpicture}
\ece
    \caption{The compositions $\be(T)$ and $\phi(T)$ when $n=9$ and $T=\{1,3,4,6,9\}$}
    \label{be(T):fig}
\end{figure}

We have a set of elements of $\SRB_n$ which, if they form a basis, will verify the previous conjecture.  
To define the analogue of the $\al$-sequence, it will be convenient to use the notation $\chi(\cS)$ which is $1$ if the statement $\cS$ is true, or $0$ if it is false.
Let the {\em $\be$-sequence} of $T\sbe[n]$ to be the composition defined recursively by 
$\be_1(T)=\chi(1\not\in T)$ and
$$
\be_i(T) = \be_{i-1}(T)+\chi(i\not\in T)+ \chi(i-1\not\in T)
$$
for $i\in[2,n]$.  \Cref{be(T):fig} contains an example.

\begin{defn}
\label{SAB_n:def}
The {\em super Artin set} for $\SRB_n$ is
$$
\cSAB_n =\{x^\al \th_T\ :\ \text{$T\sbe[n]$ and $\al\le\be(T)$}\}.
$$
\end{defn}

We conjecture that $\cSAB$ is, in fact, a basis.
\bcon
\label{SAB_n:con}
The set $\cSAB_n$ is a basis for $\SRB_n$.
\econ

Similar to type $A$, the composition $\be(T)$ can be considered as an expansion of the double staircase.  Let $T\sbe[n]$ with $\#T=n-k$.  The {\em type $B$ expansion composition} is $\phi(T)=(\phi_0(T),\ldots,\phi_k(T))$ where
$$
\phi_j(T) = \#\{i\in T\ :\ \be_i(T) = 2j\}.
$$
\Cref{be(T):fig} also lists the expansion composition for the given $n$ and $T$.  So $\phi_j(T)$ is just the number of columns of $\be(T)$ of height $2j$.  It is not hard to see that removing all the even height columns from $\be(T)$ leaves a copy of the double staircase 
$(1,3,\ldots,2k-1)$ and thus $|\phi(T)|=n-k$.
\bpr
\Cref{SAB_n:con} implies \Cref{SRB_n:con}.
\epr
\begin{proof}
Suppose $T\sbe[n]$ with $\#T=n-k$.  The discussion of 
$\phi(T)=(\phi_0,\ldots,\phi_k)$ just given 
shows that the monomials in $\cS\cA\cB_n$  whose theta component is $\th_T$ have a factor of $[2k]!!$ from the columns corresponding to the double staircase, and a factor of 
$[1]^{\phi_0}[3]^{\phi_1}\cdots[2k+1]^{\phi_k}$ from the column of even length.  Summing over all such $T$ gives a contribution of
$$
[2k]!! h_{n-k}([1],[3],\ldots,[2k+1])=S_B^o[n,k].
$$
to the $q$-grading.
Now the fact that  $\th_T$ has degree $t^{n-k}$ for these $T$ completes the proof. 
\end{proof}

For the  description of $\cSAB$ in terms of ordered set partitions $\om$ we will use the same notation as in type $A$. This will cause no confusion because it will be clear from context whether the necessary functions are being applied to a partition which is type $A$ or type $B$.  We write $\om\comp\spn{n}$ if $\om=(S_0/S_1/S_2/\ldots)$ is an ordered set partition of $\spn{n}$.  The definitions~\eqref{inv_s}, \eqref{eta}, and \eqref{T(om)} carry over to type $B$ without change.  For example, if we have
$\om=(0\ol{1}1 \mid 4/\ol{4} \mid \ol{2}3/2\ol{3})$ then 
$\eta(\om)=(0,0,1,3)$ and $T(\om)=\{1,3\}$.

\bpr
We have
$$
\cSAB_n =\{\bx^{\eta(\om)} \th_{T(\om)}\ :\ \om\comp\spn{n}\}.
$$
\epr
\bprf
As in type $A$, 
it suffices to define a weight-preserving bijection  from the pairs $(T,\al)$ appearing in~\Cref{SAB_n:def} to the $\om\comp\spn{n}$.  Given $(T,\al)$ we construct $\om$ by starting with $\om=(0)$ and inserting the numbers $\pm1,\pm2,\ldots,\pm n$ in order according to the following rules.  Note that the position of $k$ forces the position of $-k$ so that either both are in the zero block or $-k$ is in the block paired to the one containing $k$.
\ben
\item  If $k\in T$ then put $k$ in the existing block of $\om$ so that  exactly $\al_k$ new inversions result.  (The forced insertion of $-k$ will not cause any new inversions in this case.)
\item If $k\not\in T$ then make $k$ and $-k$ a new pair of blocks of $\om$ so that exactly $\al_k$ new inversions result.
\een
As before, details that this map is well defined and invertible are straightforward and so omitted.
\eprf

\begin{exa}
Suppose that $n=3$, $T=\{2,3\}$, and $\al=(0,1,2)$.  We start with $\om=(0)$.  Since $1\not\in T$, we must add $1$ and $\ol{1}$ as separate blocks.  And $\al_1=0$  means that no inversions are to be created so that now 
$$
\om=(0\mid \ol{1}/1).
$$
We have $2\in T$ so it must be placed in a block with other elements.  And it must create $\al_2=1$ new inversion.  This forces $2$ into the block with $\ol{1}$ and also $\ol{2}$ into the block with $1$ resulting in
$$
\om=(0\mid \ol{1}2/1\ol{2}).
$$
Finally, $3\in T$ so $T$ will go into one of the existing blocks.  It must create $\al_3=2$ inversions and so must reside in the zero block.  It follows that $\ol{3}$ is also in $S_0$ and we finally have
$$
\om=(0\ol{3}3\mid \ol{1}2/1\ol{2}).
$$
\end{exa}

\section{Comments and open questions}
\label{coq}

In this section we collect various comments and open questions raised by the present work.

\subsection{Complex reflection groups}

If $G$ is any complex reflection group, then one can define Stirling numbers of the first and second kind for $G$ using the Whitney numbers of the first and second kind, respectively, for $G$'s intersection lattice analogous to \Cref{Pi_B_n} in type $B$.
It follows from the work of Shephard and Todd~\cite{ST:fur} that the Stirling numbers of the first kind can be expressed as elementary symmetric polynomials in the coexponents of $G$.  For the Stirling numbers of the second kind, the situation is more complicated and this approach is explored in~\cite{SS:snc}.

\subsection{Major index for signed permutations and super coinvariant bases}

We showed in \Cref{B_part} that $S_B[n,k]$ can be viewed as the generating function for both an inversion and a major index statistic on signed partitions.  By contrast, only an inversion generating function was given for $c_B[n,k]$ in \Cref{cB:inv}.  It would be interesting to find a major index analogue for permutations in type $B$.  

In another direction, the Artin basis in type $A$ has a well-known ``major index analogue,'' the Garsia--Stanton basis 
$$
\left\{\prod_{i \in \Des(\pi)} x_{\pi(1)} \cdots x_{\pi(i)}\ 
:\ \sigma \in S_n\right\}.
$$
See~\cite{GS:SR}.
Adin--Brenti--Roichman \cite{ABR:drms} gave a type $B$ analogue of the Garsia--Stanton basis using the \textit{flag-major index} of Adin--Roichman \cite{AR:fmaj}. It would be interesting to give conjectured super coinvariant extensions of these bases in types $A$ and $B$. 

\subsection{$q$-difference equations}
\label{qec}

The generating function 
$$
C = \sum_{n, k \geq 0} c[n, k] t^k \frac{x^n}{[n]!}
$$ 
in
\Cref{cnk_egf} satisfies the first-order linear $q$-difference equation
\beq
\label{eq_C_one}
  D_q C - \frac{tC}{1-x} = 0
\eeq
where $C(0, t) = 1$.
The classical solution of $y' + p(x)y = 0$ given by $y = \exp\left(-\int p(x)\,dx\right)$ generalizes to $q$-difference equations using Gessel's $q$-composition and $q$-chain rule. In particular, if $D_q Y + P(x) Y = 0$, then $Y = \exp\left[\int P(x)\,d_qx\right]$, where we have used the $q$-antiderivative. From \eqref{eq_C_one}, $C = \exp[-y\log_q(1-x)]$.

Now consider 
$$C_B = \sum_{n, k \geq 0} c_B[n, k] t^k \frac{x^n}{[n]!}.
$$
Using manipulations similar to those in the proof of \Cref{cnk_egf}, one may show $C_B$ obeys the following second-order linear $q$-difference equation.
\begin{lem}
  We have
  \beq
    \label{eq_CB_two}
    x^2 q^2(1-q)D_q^2 C_B + (1-q(1+q)x) D_q C_B - (1+t) C_B = 0 
  \eeq
where $C_B(0, t) = 1$.
\end{lem}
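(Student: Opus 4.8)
The plan is to mimic the generating-function argument used for \Cref{cnk_egf}, while tracking the extra growth coming from the coefficient $[2n-1]$ in the recursion \eqref{eq:cBq_rec}; it is precisely this non-constant coefficient that forces a second-order rather than a first-order equation. First I would apply $D_q$ to $C_B$ term by term using $D_q x^n = [n] x^{n-1}$, reindex $n\mapsto n+1$, and substitute \eqref{eq:cBq_rec}. The $c_B[n,k-1]$ part of the recursion repackages into $t\,C_B$, leaving
\[
  D_q C_B = t\,C_B + \sum_{m,k\ge0} [2m+1]\, c_B[m,k]\, t^k \frac{x^m}{[m]!}.
\]
The entire difficulty is the factor $[2m+1]$: unlike the type $A$ situation in \Cref{cnk_egf}, it is not a constant and cannot simply be pulled out of the sum, and it secretly encodes a $q^2$-dilation that must be turned into honest $q$-derivatives.

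The key algebraic step is the identity
\[
  [2m+1] = 1 + 2q[m] - q(1-q)[m]^2,
\]
which one verifies directly from $[2m+1] = (1 - q\cdot q^{2m})/(1-q)$ together with $q^m = 1 - (1-q)[m]$. Writing $\theta = xD_q$ for the $q$-Euler operator, which satisfies $\theta\, x^m = [m]\, x^m$, the displayed sum becomes $\bigl(1 + 2q\theta - q(1-q)\theta^2\bigr) C_B$. I would then reduce $\theta^2$ to standard derivatives by means of the $q$-product rule $D_q(xg) = qx\,D_q g + g$ (valid since $D_q x = 1$), which gives
\[
  \theta^2 C_B = x D_q\bigl(x D_q C_B\bigr) = q\, x^2 D_q^2 C_B + x D_q C_B .
\]

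Substituting these expressions into the equation above and collecting the coefficients of $C_B$, of $x D_q C_B$, and of $x^2 D_q^2 C_B$ (the middle coefficient simplifying as $2q - q(1-q) = q(1+q)$ and the top one as $-q(1-q)\cdot q = -q^2(1-q)$) yields exactly \eqref{eq_CB_two} after moving all terms to one side. The boundary condition is immediate, since $C_B(0,t) = \sum_k c_B[0,k]\, t^k = c_B[0,0] = 1$. I expect the main obstacle to be conceptual rather than computational: recognizing that the non-constant multiplier $[2m+1]$ should be linearized as a quadratic polynomial in $[m]$, the eigenvalue of $\theta$, and then correctly accounting for the noncommutativity of $x$ and $D_q$ — so that $\theta^2 \ne x^2 D_q^2$ — via the $q$-product rule. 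Once these two points are in place, the remainder is routine bookkeeping.
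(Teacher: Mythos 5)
Your derivation is correct and is essentially the argument the paper intends: the paper proves this lemma only by the remark that it follows from ``manipulations similar to those in the proof of \Cref{cnk_egf},'' and your proof is exactly that template---apply $D_q$, reindex, substitute \eqref{eq:cBq_rec}---supplemented by the two details the paper leaves implicit, namely the linearization $[2m+1]=1+2q[m]-q(1-q)[m]^2$ and the conversion $\theta^2 C_B = qx^2D_q^2C_B + xD_qC_B$ via the $q$-product rule, all of which check out and yield \eqref{eq_CB_two} with $C_B(0,t)=1$. As a consistency check, your operator form $1+2q\theta-q(1-q)\theta^2$ (with $\theta = xD_q$) agrees with the paper's later rewriting of \eqref{eq_CB_two} in terms of the dilation operator $Q$, since $Q=1-(1-q)\theta$.
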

\noindent When $q=1$, this becomes a first order differential equation with solution $(1-2x)^{-(1+t)/2}$, giving an alternate proof of \Cref{egf:thm}(d).

We are unaware of techniques which allow us to solve general second-order linear q-differential equations in terms of well-known operations. However, we may rewrite \eqref{eq_CB_two} as a first order equation using the \textit{$q$-pre-composition operator} $Q(F(x)) = F(qx)$:
  \[ D_q C_B - qx D_q (1+Q) C_B - (1+t) C_B = 0. \]
If we replace $Q$ with $q$ (i.e. post-multiply by $q$ rather than pre-multiply by $q$), we may solve the resulting $q$-differential equation using the methods of the previous paragraph, which may be considered as a ``first approximation'' of $C_B$. More explicitly, we have
  \[ (1 - q(1+q) x) D_q  \widetilde{C}_B - (1+t)  \widetilde{C}_B = 0 \]
where
  \[ \widetilde{C}_B = \exp_q\left[-\frac{1+t}{q(1+q)} \log_q[1 - q(1+q) x]\right]. \]
When $q=1$, we again recover \Cref{egf:thm}(d). Since a wide variety of special functions satisfy second order linear differential equations, solving \eqref{eq_CB_two} in a recognizable way would be interesting.

\subsection{Sign-reversing involutions}

In \Cref{as} we gave combinatorial proofs of the alternating sums involving $S^o[n,k]$ and $S_B^o[n,k]$ using sign-reversing involutions.  We also showed how these equations were special cases of the symmetric function function identity in \Cref{t^n:x}, where that result was demonstrated by algebraic manipulation.  Can \Cref{t^n:x} itself be proved by sign-reversing involution?

\subsection{Log concavity}

Partially order $\bbR[\bx]$ by letting $f(\bx)\le g(\bx)$ if $g(\bx)-f(\bx)\in\bbR^+[\bx]$ where $\bbR^+$ is the nonnegative reals.  A sequence of polynomials $(f_k(\bx))_{k\ge0}=f_0(\bx), f_1(\bx),f_2(\bx),\ldots$ is said to be {\em $\bx$-log-concave} if
$$
f_k(\bx)^2 \ge f_{k-1}(\bx) f_{k+1}(\bx)
$$
for all $k>0$.  Call the sequence {\em strongly $\bx$-log-concave} if
$$
f_k(\bx) f_l(\bx) \ge f_{k-1}(\bx) f_{l+1}(\bx)
$$
for all $l\ge k>0$.  Clearly strong $\bx$-log concavity implies $\bx$-log concavity.  The converse is not true in general, although it is well known that it does hold if the polynomials are all positive constants. And in the case of a sequence of constants we say that it is just 
{\em log-concave}.

The following is a corollary of Theorem 2.6 in~\cite{sag:lcs}.
\bth
If the sequence $x_1,x_2,x_3,\dots$ is strongly $\bx$-log-concave then so are the following sequences
$$
(e_k(n))_{n\ge0} \qmq{and} (h_k(n))_{n\ge0}
$$
where $k$ is fixed, as well as
$$
(e_{k-j}(n+j))_{j\ge0} \qmq{and} (e_{k-j}(n+j))_{j\ge0}
$$
where both $k$ and $n$ are fixed.\hqed
\eth
All the definitions in the previous paragraph apply when there is a single variable $q$.
It is easy to see that the sequence $[1]_q,[2]_q,[3]_q,\ldots$ is strongly $q$-log-concave.  So any subsequence will be as well.  Combining this fact, the previous theorem, and parts (a) and (b) of \Cref{cS:eh} give the following result.
\bco
For fixed $n$, the sequences $(S_B[n,k])_{k\ge0}$ and $(c_B[n,k])_{k\ge0}$ are strongly $q$-log-concave.\hqed
\eco

A condition that is implied by log concavity for positive integer sequences $a_0,a_1,\ldots,a_n$ is {\em unimodality} which means that there is some index $m$ such that
$$
a_0\le a_1\le \ldots\le a_m \ge a_{m+1}\ge \ldots \ge a_n.
$$
So, one may ask if the sequences of coefficients of the polynomials $S[n, k]$, 
$S_B[n, k]$, $c[n, k]$, or $c_B[n, k]$ are unimodal or even log-concave for each particular choice of $n, k$. Using a brute-force computation for $n, k \leq 50$, we have the following conjecture.

\begin{conj}
  For each $n, k$, the coefficients of $S[n, k]$ and $c[n, k]$ are log-concave and positive, hence unimodal.
\end{conj}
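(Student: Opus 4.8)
The plan is to reduce the entire statement to log-concavity and then attack that through the defining recursions together with the fact that convolution preserves log-concavity. First, positivity is immediate: since $c[n,k]=e_{n-k}([1],\ldots,[n-1])$ and $S[n,k]=h_{n-k}([1],\ldots,[k])$ are sums of products of the polynomials $[i]_q=1+q+\cdots+q^{i-1}$, every coefficient is a nonnegative integer, and the constant terms $e_{n-k}(1,\ldots,1)=\binom{n-1}{n-k}=h_{n-k}(1,\ldots,1)$ are strictly positive for $0\le k\le n$. Moreover, a nonnegative log-concave coefficient sequence of a nonzero polynomial automatically has no internal zeros (if $a_j=0$ then $a_{j-1}a_{j+1}\le a_j^2=0$ forces the zeros to propagate to one end), hence is unimodal. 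Thus the whole conjecture follows once log-concavity is proved, and I would work with the stronger target: for all $n,k$ the coefficient sequences of $S[n,k]$ and $c[n,k]$ are log-concave.

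Second, I would note that the naive route via real-rootedness is unavailable: already $c[4,3]=[1]+[2]+[3]=q^2+2q+3$ has non-real roots, so Newton's inequalities cannot be invoked. Instead I would induct using \eqref{eq:cq_rec} and \eqref{eq:Sq_rec}. The multiplicative part of each recursion is benign: multiplying a polynomial $f(q)$ by $[m]_q$ convolves its coefficient sequence with the constant window $(1,\ldots,1)$ of length $m$, and the convolution of two nonnegative log-concave sequences with no internal zeros is again log-concave with no internal zeros. Hence if $c[n-1,k]$ (resp. $S[n-1,k]$) is log-concave, so is $[n-1]\,c[n-1,k]$ (resp. $[k]\,S[n-1,k]$).

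The genuine obstacle is the additive step, since a sum of two log-concave sequences need not be log-concave. What is true is that $(a_i+b_i)$ is log-concave provided the \emph{cross} inequalities $a_ib_i\ge a_{i-1}b_{i+1}$ and $a_ib_i\ge a_{i+1}b_{i-1}$ hold, because then $(a_i+b_i)^2=a_i^2+2a_ib_i+b_i^2\ge a_{i-1}a_{i+1}+a_{i-1}b_{i+1}+a_{i+1}b_{i-1}+b_{i-1}b_{i+1}=(a_{i-1}+b_{i-1})(a_{i+1}+b_{i+1})$. I would therefore strengthen the inductive hypothesis to carry, for each fixed $n$, the \emph{cross-log-concavity} of the two summand sequences $c[n-1,k-1]$ and $[n-1]\,c[n-1,k]$ (and likewise $S[n-1,k-1]$ and $[k]\,S[n-1,k]$); equivalently, that the coefficient array $\big([q^d]c[n,k]\big)_{k,d}$ is totally positive of order two ($\mathrm{TP}_2$). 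The crux is then twofold: (i) to show that this $\mathrm{TP}_2$/cross-log-concavity property is preserved when an entire row is convolved by the window $[m]_q$, accounting correctly for the degree shift between the $c[n-1,k-1]$ and $c[n-1,k]$ terms; and (ii) to verify the base and boundary cases. I expect (i) to be the main difficulty: convolving one of two cross-related sequences by a window must remain compatible with the convolution of the other, which is exactly a statement about preservation of total positivity under a nonsquare banded transformation.

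As a conceptual alternative, and a likely source for the injections needed above, I would exploit the combinatorial interpretations: by the type $A$ analogue of \Cref{cB:inv}, $[q^d]c[n,k]$ counts permutations of $[n]$ with $k$ cycles and inversion statistic equal to $d$, and similarly $[q^d]S[n,k]$ counts set partitions via the type $A$ analogue of \Cref{SB:inv}. The inequality $a_{d-1}a_{d+1}\le a_d^2$ would follow from an explicit injection from pairs of such objects with statistics $(d-1,d+1)$ to pairs with statistics $(d,d)$, and a switching/exchange map in the spirit of the merge-and-split maps of \Cref{involA} is the natural candidate. Designing a single injection that works uniformly in $n,k,d$ is likely to be delicate, but it has the advantage of establishing the cross-log-concavity inequalities simultaneously and combinatorially, thereby furnishing precisely the strengthened invariant that the inductive argument above requires.
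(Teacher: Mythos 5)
You should first be clear about the status of this statement: in the paper it is an open \emph{conjecture}, supported only by brute-force computation for $n,k\le 50$; the paper contains no proof, so there is nothing to match your argument against. More importantly, your proposal is not a proof either. It is a reduction of the problem to two unproven claims, and you say so yourself: step (i), preservation of the cross-log-concavity/$\mathrm{TP}_2$ invariant under the recursion \eqref{eq:Sq_rec} and \eqref{eq:cq_rec}, is flagged as ``the main difficulty,'' and the alternative combinatorial injection is ``likely to be delicate.'' The preliminary observations are reasonable --- the cross inequalities $a_ib_i\ge a_{i-1}b_{i+1}$ and $a_ib_i\ge a_{i+1}b_{i-1}$ do suffice for the sum, and convolution with the window $[m]_q$ does preserve log-concavity of a nonnegative sequence with no internal zeros --- but the entire content of the conjecture lives in the step you have not carried out.

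There is also a structural reason to believe the plan, as described, cannot succeed: it is formally insensitive to the difference between types $A$ and $B$. The type $B$ recursions \eqref{eq:SBq_rec} and \eqref{eq:cBq_rec} have exactly the same shape as their type $A$ counterparts --- add the $(k-1)$-st entry to a window-convolved $k$-th entry --- yet the paper exhibits $S_B[6,4]$ and $c_B[7,5]$ whose coefficient sequences are not even unimodal. So any inductive invariant that is preserved purely because of the shape of the recursion would prove the type $B$ statement as well, which is false. Your step (i) must therefore exploit the specific interaction between the window length ($[k]$ versus $[2k+1]$) and the degree offsets of the two summands, and nothing in the proposal indicates what that mechanism would be. Two smaller errors: your parenthetical claim that a nonnegative log-concave sequence automatically has no internal zeros is wrong --- the sequence $1,0,0,1$ satisfies every inequality $a_j^2\ge a_{j-1}a_{j+1}$ but is not unimodal, and your propagation argument breaks exactly there --- so ``log-concave and positive'' requires proving positivity of \emph{every} coefficient in the support, not just nonnegativity plus a positive constant term. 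That positivity is in fact easy (the single term $[k]^{n-k}$ inside $h_{n-k}([1],\dots,[k])$ already has strictly positive coefficients in every degree up to $\deg S[n,k]=(k-1)(n-k)$, and similarly for $c[n,k]$), but your argument as written does not establish it.
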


In type $B$, the coefficients are not necessarily even unimodal. The first counterexamples are as follows:
\begin{align*}
    S_B[6, 4]
      &= 15 + 24 q + 34 q^2 + 38 q^3 + 43 q^4 + 42 q^5 + 43 q^6 + 38 q^7 + 
 35 q^8 \\
      &+ 26 q^9 + 20 q^{10} + 14 q^{11} + 10 q^{12} + 6 q^{13} + 4 q^{14} + 
 2 q^{15} + q^{16} \\
    c_B[7, 5]
      &= 21 + 36 q + 51 q^2 + 60 q^3 + 70 q^4 + 74 q^5 + 79 q^6 + 78 q^7 + 79 q^8 \\
      &+ 74 q^9 + 71 q^{10} + 62 q^{11} + 56 q^{12} + 44 q^{13} + 35 q^{14} + 26 q^{15} + 20 q^{16} \\
      &+ 14 q^{17} + 10 q^{18} + 6 q^{19} + 4 q^{20} + 2 q^{21} + q^{22}
\end{align*}

A sequence $a_0, a_1, a_2, \ldots$ is \textit{parity-unimodal} if $a_0, a_2, a_4, \ldots$ and $a_1, a_3, a_5, \ldots$ are each unimodal. See the article of Billey, Konvalinka, and Swanson~\cite[\S9]{BKS:tpf} for additional instances of this notion. Inspired by this definition, we say that such a sequence is \textit{parity log-concave} if $a_0, a_2, a_4, \ldots$ and $a_1, a_3, a_5, \ldots$ are each log-concave and similarly for parity unimodal.

\begin{conj}
  For each $n, k$, the coefficients of $S_B[n, k]$ and $c_B[n, k]$ are parity log-concave and positive, hence parity unimodal.
\end{conj}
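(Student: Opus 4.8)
The positivity claims are immediate and can be dispatched first: the recursions \eqref{eq:SBq_rec} and \eqref{eq:cBq_rec} build $S_B[n,k]$ and $c_B[n,k]$ out of the polynomials $[2j\pm1]$, all of which have nonnegative integer coefficients, and the combinatorial interpretations in \Cref{SB:inv} and \Cref{cB:inv} show that every coefficient in the relevant degree range is strictly positive. So the real content is parity log-concavity. The plan is to isolate the two parity classes: writing $u=q^2$ and $S_B[n,k]=P_{n,k}(u)+q\,R_{n,k}(u)$ (and analogously for $c_B$), the conjecture becomes ordinary log-concavity of the coefficient sequences of $P_{n,k}$ and $R_{n,k}$ as polynomials in $u$. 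The elementary identity driving everything is $[2j+1]=[j+1]_{q^2}+q\,[j]_{q^2}$ (and $[2n-1]=[n]_{q^2}+q\,[n-1]_{q^2}$ for the first-kind case), which feeds the even/odd split through the recursion.

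The most optimistic route is to prove the stronger statement that $P_{n,k}(u)$ and $R_{n,k}(u)$ are each real-rooted with only nonpositive real roots; by Newton's inequalities this gives (ultra-)log-concavity of their coefficients and hence the conjecture. Substituting the split into \eqref{eq:SBq_rec} and using the identity above yields the coupled recursion
\begin{align*}
P_{n,k} &= P_{n-1,k-1} + [k+1]_u\,P_{n-1,k} + u\,[k]_u\,R_{n-1,k}, \\
R_{n,k} &= R_{n-1,k-1} + [k+1]_u\,R_{n-1,k} + [k]_u\,P_{n-1,k}.
\end{align*}
I would then try to carry an interlacing invariant through this system by induction on $n$ and $k$, in the spirit of the method of interlacing polynomials: maintain that the family $\{P_{n,k},R_{n,k}\}$ together with its $k$-shifts is mutually interlacing, so that the nonnegative combinations appearing on the right-hand sides stay real-rooted. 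The building blocks cooperate, since each $[j]_u$ has only negative real roots and multiplication by such a factor, together with addition within an interlacing family, preserves real-rootedness.

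If real-rootedness of the parts fails, I would fall back on a direct injective proof of $a_{d-2}\,a_{d+2}\le a_d^2$ within each parity class, where $a_d=\#\{\rho:\inv\rho=d\}$ ranges over the signed partitions counted by $S_B[n,k]$ (resp. the signed permutations counted by $c_B[n,k]$). The goal would be a parity-preserving injection from pairs $(\rho,\rho')$ with $\inv\rho=d-2$, $\inv\rho'=d+2$ to pairs with both inversion numbers equal to $d$, built by a matching or lattice-path argument analogous to those used in \cite{sag:lcs} for log-concavity across the Stirling triangle, but transported to the single-polynomial setting. The subtlety here is that the natural local moves (inserting or shifting a single element, as in the proofs of \Cref{SB:inv} and \Cref{cB:inv}) change $\inv$ by $1$, not $2$, so one must exploit the forced sign-pairing — the placement of $-k$ contributes no inversions — to assemble moves that change $\inv$ by an even amount, thereby respecting the parity classes.

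The hard part, in either route, is the coupling of the two parity classes. Because $[2k+1]$ straddles both parities, the recursion does not decouple into two self-contained scalar recurrences: the even part of $S_B[n,k]$ depends on the odd part of $S_B[n-1,k]$ and conversely, so any inductive invariant must simultaneously control $P_{n,k}$, $R_{n,k}$, and their interaction. I expect this to be the main obstacle, and it is also why the weaker ``mixed'' properties such as full unimodality can fail while the parity versions persist. I would emphasize that the strong $q$-log-concavity of the rows $(S_B[n,k])_{k\ge0}$ and $(c_B[n,k])_{k\ge0}$ noted earlier is a statement across the index $k$ and gives no information about the coefficient sequence of a single polynomial, so that machinery does not apply directly; pinning down an invariant robust enough to survive the parity coupling — whether an interlacing pattern or a combinatorial sign-balance — is where the difficulty concentrates.
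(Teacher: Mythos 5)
First, a point of orientation: the statement you are proving is a \emph{conjecture} in the paper, verified there only by computer for $n,k\le 50$, so there is no proof of record to compare against; the question is whether your proposal actually closes it. It does not. What you have written is a research plan, not a proof: the parity-split decomposition $S_B[n,k]=P_{n,k}(q^2)+q\,R_{n,k}(q^2)$ and the coupled recursion you derive from \eqref{eq:SBq_rec} via $[2k+1]=[k+1]_{q^2}+q[k]_{q^2}$ are correct, but neither of your two routes is carried out, and you yourself flag the coupling of the parity classes as an unresolved obstacle. A proof must resolve that obstacle, not name it.

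More importantly, your ``most optimistic route'' is provably dead, not merely incomplete. The claim that ``each $[j]_u$ has only negative real roots'' is false: the roots of $[j]_u=1+u+\cdots+u^{j-1}$ are the $j$th roots of unity other than $1$, which are non-real for $j\ge 3$ (e.g.\ $[3]_u=1+u+u^2$ has discriminant $-3$). Worse, the parity parts themselves fail to be real-rooted exactly in the regime the conjecture is designed for. Take the paper's own example $S_B[6,4]$: its even part is $P(u)=15+34u+43u^2+43u^3+35u^4+20u^5+10u^6+4u^7+u^8$. If $P$ were real-rooted with positive coefficients, Newton's inequalities would force $a_1^2\ \ge\ \tfrac{(2)(8)}{(1)(7)}\,a_0a_2=\tfrac{16}{7}\cdot 15\cdot 43\approx 1474$, but $a_1^2=34^2=1156$. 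So $P$ is log-concave yet not real-rooted, and no interlacing or real-rootedness induction can possibly establish the conjecture. That leaves only your fallback, a parity-preserving injection on signed partitions and signed permutations, which is stated as a goal with its key difficulty (the natural local moves change $\inv$ by $1$, not $2$) identified but not overcome; no injection is constructed. As it stands the conjecture remains open; the salvageable content of your proposal is the (correct) coupled recursion for the even and odd parts, together with the observation above that the stronger real-rootedness statement is false, which usefully narrows the space of viable attacks.
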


Another common property of a sequence $a_0, a_1, a_2, \ldots, a_n$ is that it is {\em symmetric} meaning that $a_k=a_{n-k}$ for all $k\in[n]$.  From the examples above, it is clear that the Stirling polynomials do not have symmetric coefficients, but there is a related condition that they seem to enjoy.
A sequence  is {\em bottom heavy} if $a_k \geq a_{n-k}$ for $k < n/2$.  See the article of McConville, Sagan, and Smyth~\cite{MSS:ruc} as well as the references therein for more about the bottom heavy condition.

\begin{conj}
  For each $n, k$, the coefficients of $S[n, k]$, $c[n, k]$, $S_B[n, k]$, and $c_B[n, k]$ are all bottom heavy.
\end{conj}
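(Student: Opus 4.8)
The plan is to prove all four cases uniformly by exploiting the symmetric-function descriptions in \Cref{cS:eh} and the list of type $A$ analogues preceding it. By those results, each of $S[n,k]$, $c[n,k]$, $S_B[n,k]$, and $c_B[n,k]$ equals $h_{n-k}(\ba)$ or $e_{n-k}(\ba)$ for one of the lists $\ba=([1],[2],\ldots)$ or $\ba=([1],[3],\ldots)$. Expanding the complete homogeneous or elementary symmetric polynomial into monomials writes each Stirling polynomial as a sum of products of $q$-integers $[i_1][i_2]\cdots[i_{n-k}]$. Since every coefficient is nonnegative there is no cancellation, so if $N$ denotes the degree of the whole Stirling polynomial, then $N$ is exactly the largest degree occurring among these products. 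I would reduce the conjecture to two statements about the individual products.

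First, each factor $[m]=1+q+\cdots+q^{m-1}$ has the constant, hence symmetric and unimodal, coefficient sequence $(1,\ldots,1)$. By the classical fact that a product of symmetric, unimodal polynomials with nonnegative coefficients is again symmetric and unimodal (see, e.g.,~\cite{sta:ec1}), every product $[i_1]\cdots[i_{n-k}]$ is a symmetric, unimodal polynomial. Second, and this is the conceptual heart, I would prove the following elementary lemma: if $p=\sum_{i=0}^d b_i q^i$ is symmetric and unimodal (with $b_j:=0$ for $j\notin[0,d]$), then for every integer $N\ge d$,
\[
b_k \ge b_{N-k}\qquad\text{for all }k<N/2 .
\]
That is, $p$ is bottom heavy with respect to any ambient degree $N\ge\deg p$. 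Its proof is a short case analysis: if $N-k>d$ then $b_{N-k}=0\le b_k$; otherwise symmetry gives $b_{N-k}=b_{d-N+k}$, where $k<N/2$ forces $d-N+k<d/2$, and then unimodality yields $b_{d-N+k}\le b_k$ (comparing $d-N+k$ directly with $k$ when $k\le d/2$, and with $d-k$ via the symmetry $b_k=b_{d-k}$ when $d/2<k<N/2$, both indices lying in the nondecreasing half $[0,d/2]$).

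The crucial point is that bottom-heaviness \emph{relative to a fixed $N$} is an additive property: if each summand satisfies $b^{(j)}_k\ge b^{(j)}_{N-k}$ for $k<N/2$, then so does the sum, since the inequalities simply add. By contrast, unimodality is \emph{not} additive, which is precisely why the displayed polynomials $S_B[6,4]$ and $c_B[7,5]$ fail to be unimodal yet remain bottom heavy. Combining the pieces: every monomial product $[i_1]\cdots[i_{n-k}]$ has degree $\le N$ and so is $N$-bottom-heavy by the lemma; summing over all monomials of $h_{n-k}$ or $e_{n-k}$ preserves $N$-bottom-heaviness; and since the total degree is exactly $N$, the resulting coefficient sequence $(a_0,\ldots,a_N)$ is bottom heavy. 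This argument is uniform across types $A$ and $B$ and both kinds of Stirling number.

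The main obstacle is not a hard estimate but identifying the correct invariant: unimodality is too strong (it genuinely fails), and trying to induct via the Stirling recursions~\eqref{eq:Sq_rec}--\eqref{eq:cBq_rec} stumbles because the two terms have different top degrees, so naive bottom-heaviness need not propagate. Replacing unimodality by bottom-heaviness relative to the common ambient degree $N$ repairs both difficulties at once, since that property is simultaneously additive and implied termwise by the lemma. The one spot demanding genuine care is the lemma's range $d/2<k<N/2$, where one must pass through the symmetry relation $b_k=b_{d-k}$ before invoking unimodality; everything else is routine.
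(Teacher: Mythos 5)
You should know at the outset that the paper contains \emph{no proof} of this statement: it is one of the open conjectures in the final section (``Comments and open questions''), supported only by computer verification for $n,k\le 50$. So there is no argument of the authors to compare yours against; the only question is whether your proof stands on its own, and after checking it I believe it does.

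Your three ingredients all hold. First, by \Cref{cS:eh} and its stated type $A$ counterpart, each of the four polynomials is $h_{n-k}$ or $e_{n-k}$ evaluated at $q$-integers, hence a sum with nonnegative coefficients of products $[i_1][i_2]\cdots[i_{n-k}]$; since nothing cancels, the degree $N$ of the whole polynomial is the largest degree among the summands. Second, each summand is symmetric and unimodal: each $[i]$ is, and the product of symmetric unimodal polynomials with nonnegative coefficients is again symmetric and unimodal. This fact is indeed classical (write each factor as a nonnegative combination of symmetric ``intervals'' about its center; products of intervals are trapezoids, all centered at the common half-degree), but be careful with the citation: the canonical source is Stanley's survey on log-concave and unimodal sequences, and you should verify whether~\cite{sta:ec1} actually contains it before citing it. Third, your lemma is correct and your case analysis is complete: when $N-k\le d$ one has $0\le d-N+k$, symmetry gives $b_{N-k}=b_{d-N+k}$, and unimodality yields $b_{d-N+k}\le b_k$ by comparing with index $k$ when $k\le d/2$ (both indices lie in $[0,d/2]$ and $d-N+k\le k$ since $d\le N$), and with index $d-k$ when $d/2<k<N/2$ (then $b_k=b_{d-k}$, both $d-N+k$ and $d-k$ lie in $[0,d/2]$, and $d-N+k\le d-k$ since $2k\le N$). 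Since the inequality $a_k\ge a_{N-k}$ for a \emph{fixed} ambient $N$ is preserved by sums, the coefficient sequence $(a_0,\ldots,a_N)$ is bottom heavy. Your diagnosis of why this succeeds where unimodality fails---relative bottom-heaviness is additive, unimodality is not---is exactly right, and it is consistent with the paper's non-unimodal examples $S_B[6,4]$ and $c_B[7,5]$, both of which are bottom heavy. Note also that your argument proves more than the conjecture: any nonnegative combination of products of $q$-integers (e.g., $S^o[n,k]=[k]!\,S[n,k]$ and $S_B^o[n,k]$) is bottom heavy. Since this would settle a stated conjecture of the paper, write it up with full care, displaying the lemma and its proof rather than leaving the index bookkeeping in a parenthesis.
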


Both of the previous conjectures have been verified by computer  for $n, k \leq 50$.
A property which implies both bottom heaviness and unimodality is being {\em bottom interlacing} which means that
$$
a_n\le a_0  \le a_{n-1} \le a_1 \le \ldots \le a_{\fl{n/2}}
$$
where $\fl{\cdot}$ is the floor function.
Again, see~\cite{MSS:ruc} for more information about this property.
The coefficients of $c[n, k]$, $S[n, k]$, $c_B[n, k]$, and  $S_B[n, k]$ are not in general bottom interlacing. For example, 
$$
S[4, 3] = 3+2q+q^2
$$
does not have this property.

\subsection{Asymptotics}

A sequence of real-valued random variables $\mathcal{X}_1, \mathcal{X}_2, \ldots$ is \textit{asymptotically normal} if the sequence of standardized random variables $\mathcal{X}_1^*, \mathcal{X}_2^*, \ldots$ converges in distribution to the standard normal distribution $\mathcal{N}(0, 1)$. More explicitly, this means that for all $t \in \mathbb{R}$,
  \[ \lim_{n \to \infty} \mathbb{P}\left[\frac{\mathcal{X}_n - \mu_n}{\sigma_n} \leq t\right] = \int_{-\infty}^t \frac{1}{\sqrt{2\pi}} \exp(-x^2/2)\,dx. \]
In order for $\mathcal{X}_i^*$ to be well-defined, we must assume $\mathcal{X}_i$ does not have the degenerate distribution with variance $0$ supported at a single point. This is only a minor inconvenience, since degenerate distributions are normal with variance $0$ and are ``morally'' if not technically asymptotically normal.

We say that a sequence of non-zero polynomials $P_1(q), P_2(q), \ldots$ with non-negative real coefficients $P_n(q) = \sum_{k \geq 0} a_n(k) q^k$ is {\em asymptotically normal} if the sequence of random variables defined by $\mathbb{P}[\mathcal{X}_n = k] = a_n(k)/P_n(1)$ is asymptotically normal. Informally, plotting the list of coefficients of $P_n(q)$ for large $n$ must give a bell-shaped curve. See e.g.~\cite[\S2.4, \S4.1]{BKS:tpf} for further discussion and an example.

The polynomials $p_n(q)=\sum_{k\ge0} c(n,k) q^k$ and
$P_n(q)=\sum_{k\ge0} S(n,k) q^k$  were shown to be asymptotically normal by Bender~\cite{ben:cll} and by Harper~\cite{har:sha}, respectively. It is natural to ask for asymptotic estimates of the coefficients of $q$-Stirling numbers analogous to these classic results, which we have been unable to find in the existing literature. We content ourselves with the following simple case whose proof is an easy application of Bender's well-known result involving bivariate generating functions.

\begin{thm}\label{thm:Snk_an}
  Fix $k \in \mathbb{Z}_{\geq 0}$. The coefficients of  $S[n, k]$ when $k \geq 2$, and of  $S_B[n, k]$ when $k \geq 1$ are asymptotically normal as $n \to \infty$.
\end{thm}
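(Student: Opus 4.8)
The plan is to apply Bender's central limit theorem for bivariate generating functions~\cite{ben:cll}, extracting coefficients in the power-series variable while treating $q$ as an analytic parameter near $1$. The inputs are the ordinary generating functions of \Cref{cS:eh}(d),
$$
\sum_{n\ge k} S_B[n,k]\,t^n=\frac{t^k}{\prod_{i=0}^k\bigl(1-[2i+1]\,t\bigr)},
$$
together with its classical type $A$ counterpart $\sum_{n\ge k}S[n,k]\,t^n=t^k\big/\prod_{j=1}^k(1-[j]\,t)$ recalled just before \Cref{cS:eh}. Each is a proper rational function of $t$ whose poles are the reciprocals of the distinct linear factors; since $[m]$ is strictly increasing in $m$ for $q$ in a neighborhood of $1$, the pole of smallest modulus is simple and strictly dominant, namely $t=r(q)$ with $r(q)=1/[k]$ in type $A$ and $r(q)=1/[2k+1]$ in type $B$.

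First I would make the dominant-pole asymptotics explicit by partial fractions. Writing the type $B$ generating function as $\sum_{i=0}^k c_i(q)/(1-[2i+1]\,t)$ gives
$$
S_B[n,k]=\sum_{i=0}^k c_i(q)\,[2i+1]^{\,n}\sim c_k(q)\,[2k+1]^{\,n},
$$
and the type $A$ case is identical with $[2k+1]$ replaced by $[k]$. A short residue computation shows $c_k(1)=1/(2^k k!)>0$ in type $B$ (respectively $1/k!$ in type $A$), so $c_k(q)$ is nonvanishing near $q=1$; this is exactly the form $a_n(q)\sim A(q)\,r(q)^{-n}$ required by Bender. The coefficients of $S[n,k]$ and $S_B[n,k]$ are nonnegative---immediately from the recursions~\eqref{eq:Sq_rec} and~\eqref{eq:SBq_rec}, or combinatorially from \Cref{SB:inv}---so the normalized coefficient sequences are genuine probability distributions and Bender's conclusion applies once positivity of the variance is checked.

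The limiting mean and variance are controlled by the cumulant generating function $\Lambda(s)=\log\bigl(r(1)/r(e^s)\bigr)$. Setting $m=k$ in type $A$ and $m=2k+1$ in type $B$, so that $r(q)=1/[m]$, we obtain
$$
e^{\Lambda(s)}=\frac{[m]_{e^s}}{[m]_1}=\frac1m\sum_{j=0}^{m-1}e^{js},
$$
which is precisely the moment generating function of the uniform distribution on $\{0,1,\ldots,m-1\}$. Hence the per-step mean is $\mu=\Lambda'(0)=(m-1)/2$ and the per-step variance is $\sigma^2=\Lambda''(0)=(m^2-1)/12$. The nondegeneracy hypothesis $\sigma^2>0$ holds exactly when $m\ge 2$, that is when $k\ge 2$ in type $A$ and when $2k+1\ge 2$, i.e.\ $k\ge 1$, in type $B$---precisely the ranges asserted. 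The excluded cases are truly degenerate: both generating functions collapse to $1/(1-t)$, giving $S[n,1]=1$ and $S_B[n,0]=1$, whose coefficient sequences sit at a single point.

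The main obstacle is the uniformity underlying the dominant-pole estimate: Bender's theorem requires $S_B[n,k]\sim A(q)\,r(q)^{-n}$ to hold uniformly as $q$ ranges over a full complex neighborhood of $1$, not merely at the point $q=1$. For these rational generating functions this is routine but is the one place needing care. The poles $1/[2i+1]$ depend analytically on $q$ and, by continuity, remain distinct and correctly ordered in modulus throughout a sufficiently small neighborhood of $q=1$; thus $1/[2k+1]$ stays strictly dominant, the subdominant terms $c_i(q)\,[2i+1]^n$ are exponentially smaller uniformly, and $c_k(q)$ stays nonzero. With this uniformity in hand, Bender's theorem delivers asymptotic normality in exactly the stated ranges.
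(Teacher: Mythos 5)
Your proposal is correct and takes essentially the same approach as the paper: both apply Bender's theorem \cite{ben:cll} to the ordinary generating functions of \Cref{cS:eh}(d) and its type $A$ counterpart, exploiting the simple dominant pole at $t=1/[k]$ (resp.\ $t=1/[2k+1]$) together with the non-degeneracy condition $\sigma^2>0$, which yields exactly the stated ranges $k\ge 2$ and $k\ge 1$. The paper merely cites Bender's Example~3.1 (p.~95) to package the dominant-pole and uniformity analysis that you carry out by hand via partial fractions, so your explicit computation of $c_k(1)$ and of the limiting uniform-sum mean and variance is additional detail rather than a different method (one trivial slip: the type $A$ generating function is not proper, since its numerator and denominator both have degree $k$ in $t$, but the resulting constant term in the partial fraction expansion affects only the coefficient at $n=0$ and is harmless).
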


\begin{proof}
As we saw in \Cref{ogf},  the generating functions are
$$
\sum_{n \geq 0} S[n, k] x^n = 
\frac{x^k}{(1 - [1] x)(1 - [2] x)\cdots (1 - [k] x)} 
$$
and
$$ \sum_{n \geq 0} S_B[n, k] x^n
= \frac{x^k}{(1 - [1] x)(1 - [3] x)\cdots (1 - [2k+1] x)}. 
$$
  Each is of the form $g(x, q)/P(x, q)$ where 
  \ben
  \item $P(x, q)$ is a polynomial in $x$ with coefficients continuous in $q$, 
  \item $P(x, 1)$ has a simple root at $r = 1/k$ or $r = 1/(2k+1)$ with all other roots having larger absolute value, 
  \item $g(x, q) = x^k$ is entire, and 
  \item $g(r, 1) \neq 0$. 
  \een
  The distributions are non-degenerate for $n$ large when $k \geq 2$ in type $A$ and when $k \geq 1$ in type $B$. The result follows from \cite[Ex.~3.1, p.95]{ben:cll} (which inadvertently neglects the $\sigma \neq 0$ condition).
\end{proof}

A direct analogue of \Cref{thm:Snk_an} for the $q$-Stirling numbers of the first kind would require explicit expressions for $\sum_{n \geq 0} c[n, k] x^n/n!$ or $\sum_{n \geq 0} c_B[n, k] x^n/n!$, which we do not have. 
In a complementary direction, the distributions of the coefficients of $S[n, n-k]$, $S_B[n, n-k]$, $c[n, n-k]$, and  $c_B[n, n-k]$ for fixed $k$ as $n \to \infty$ appear to be non-normal, e.g. when $k=1$ the limiting standardized disribution in each case is the triangle with density $(2 \sqrt{2} - x)/9$ for $-\sqrt{2} \leq x \leq 2 \sqrt{2}$.

Based on computational data, the coefficients of the $q$-Stirling numbers of both kinds and in both types   all appear to be ``generically asymptotically normal.'' The following conjecture is one way to make this intuition precise.

\begin{conj}
  Suppose $n, k \to \infty$ in such a way that $k/n \to \alpha$ for some $0 < \alpha < 1$. Then the coefficients of $S[n, k]$, 
  $S_B[n, k]$, $c[n, k]$, and $c_B[n, k]$ are all asymptotically normal.
\end{conj}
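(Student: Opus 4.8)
The plan is to recast the statement as a combinatorial central limit theorem and attack it with a quasi-power / Lyapunov argument rather than with real-rootedness. First I would note that Harper's method is unavailable here: real-rootedness of the polynomials in $q$ would force log-concavity and unimodality, but the type $B$ examples $S_B[6,4]$ and $c_B[7,5]$ displayed above are not even unimodal, so these polynomials are not real-rooted. Instead I would exploit the symmetric-polynomial descriptions of \Cref{cS:eh}. Parts (a) give $c[n,k]=e_{n-k}([1],[2],\dots,[n-1])$ and $c_B[n,k]=e_{n-k}([1],[3],\dots,[2n-1])$, while parts (b) give $S[n,k]=h_{n-k}([1],\dots,[k])$ and $S_B[n,k]=h_{n-k}([1],[3],\dots,[2k+1])$. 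Since each $[i]=1+q+\cdots+q^{i-1}$ is (after normalizing by its value $i$ at $q=1$) the probability generating function of a uniform variable $U_i$ on $\{0,1,\dots,i-1\}$, these formulas realize each normalized $q$-Stirling polynomial as the distribution of an explicit integer statistic, namely Milne's $\inv$ on the relevant uniform random structure from Theorems~\ref{SB:inv} and~\ref{cB:inv} and their type $A$ analogues.

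For the first kind this representation is especially clean: $e_{n-k}$ expresses $c[n,k](q)$ as $\sum_{|S|=n-k}\prod_{i\in S}[i]$, so the associated random variable is obtained by choosing an $(n-k)$-subset $S\subseteq[n-1]$ with probability proportional to $\prod_{i\in S} i$ and then outputting $\sum_{i\in S}U_i$, a conditionally independent sum of bounded uniforms. I would prove asymptotic normality by (i) establishing a Lindeberg/Lyapunov central limit theorem for $\sum_{i\in S}U_i$ conditioned on a typical $S$, using that in the regime $k/n\to\alpha$ a positive proportion of the selected indices $i$ are of order $n$, so the conditional variance is of order $n^3$ and in particular grows; and (ii) controlling the mixture over $S$ by showing that the conditional mean and variance concentrate across the subset distribution, so the extra randomness in $S$ contributes only lower-order fluctuation. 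The second-kind case would be handled analogously, with the multiset expansion of $h_{n-k}$, or equivalently the sequential insertion recursion~\eqref{eq:Sq_rec} (which writes $\inv$ as a sum of independent increments $U_{(\text{blocks open})}$ conditioned on the block-opening pattern), replacing the subset expansion; the type $B$ variants follow by the same scheme applied to the signed structures.

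The hard part will be step (ii): making the mixing rigorous uniformly as $n,k\to\infty$ with $k/n\to\alpha$. One must show that the fluctuation of the conditional mean across subsets (resp. across opening patterns) is $o(\sigma_n)$, which amounts to understanding the concentration of the weighted subset distribution (resp. of the record/opener statistics of a uniform random set partition with a prescribed number of blocks) in the proportional regime. An alternative, more analytic route would bypass the combinatorial decomposition and feed the generating function $\sum_n S[n,k](q)\,x^n = x^k/\prod_{i=1}^k(1-[i]x)$ into Hwang's quasi-powers theorem, seeking a uniform estimate of $S[n,k](e^s)$ for $s$ near $0$ via a saddle-point analysis of its Cauchy integral. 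Here the obstacle is that the $k$ poles $1/[i]_q$ coalesce as $k\to\infty$, so the required uniformity in both $s$ and the merging singularities is delicate, and one must separately verify the variance-growth nondegeneracy that already fails in the small-$k$ boundary cases excluded in \Cref{thm:Snk_an}.
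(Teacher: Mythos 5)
There is no proof in the paper to compare yours against: the statement you are attacking is stated there explicitly as a conjecture, supported only by computational data. The paper proves asymptotic normality only for \emph{fixed} $k$ (\Cref{thm:Snk_an}), via Bender's meromorphic scheme applied to $\sum_{n\ge0} S[n,k]x^n = x^k/\prod_{i=1}^k(1-[i]x)$, and the authors note that tools for the proportional regime $k/n\to\alpha$ are lacking --- indeed they point to exactly the obstruction you identify, namely that one would have to exploit the recursion or the $q$-difference equations rather than a dominant-singularity analysis. So the only question is whether your argument stands on its own as a proof, and it does not: it is a program, not a proof. You yourself concede that step (ii) (controlling the mixture over $S$) is ``the hard part'' and is not carried out, and that the quasi-powers alternative requires a uniform saddle-point analysis near $k$ coalescing poles that you do not supply. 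A conjecture plus a plan with acknowledged open steps is still a conjecture.

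Beyond incompleteness, one step of the plan is wrong as stated. For $c[n,k]=e_{n-k}([1],[2],\dots,[n-1])$ in the regime $k/n\to\alpha$, your mixture representation is correct, but the claim that the randomness in $S$ ``contributes only lower-order fluctuation'' fails. The conditional mean is $\sum_{i\in S}(i-1)/2$; under the weighted subset measure (a size-conditioned Bernoulli ensemble with inclusion odds proportional to $i$), a constant fraction of the indices $i\asymp n$ have inclusion probabilities bounded away from $0$ and $1$, so this conditional mean fluctuates at order $n^{3/2}$ --- the same order as the typical conditional standard deviation $\bigl(\sum_{i\in S}(i^2-1)/12\bigr)^{1/2}$. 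Thus in the decomposition $\mathrm{Var} = E[\mathrm{Var}(\cdot\mid S)] + \mathrm{Var}(E[\cdot\mid S])$ both terms are of order $n^3$, and concentration of the conditional mean at scale $o(\sigma_n)$ is impossible. The approach can plausibly be repaired, but what it then requires is \emph{joint} asymptotic normality: a CLT for the conditional mean across the subset ensemble, together with the conditional CLT and an argument that the two limits convolve --- a genuinely different and harder statement than the concentration you posited. The same issue afflicts the $h_{n-k}$/insertion version for the second kind and the type $B$ variants. So even as a program, step (ii) must be reformulated before the argument could close, and as it stands the conjecture remains open.
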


More strongly, the coefficients of $S[n, k], S_B[n, k], c[n, k], c_B[n, k]$ appear to tend towards a ``limit shape.'' See \Cref{S_ns} for an example with $S[n, k]$. 
Note that the slices for fixed $k$ appear to be parabolic near their maximum, consistent with asymptotic normality, at least for $k$ not close to $0$ or $n$.
It would be interesting to find the limit shape precisely in each case. It would also be very interesting to develop tools for proving limit shapes of the coefficients arising from recursions similar to \eqref{eq:Sq_rec}, perhaps by exploiting their $q$-difference equations.

\begin{figure}
  \centering
  \includegraphics[scale=0.5]{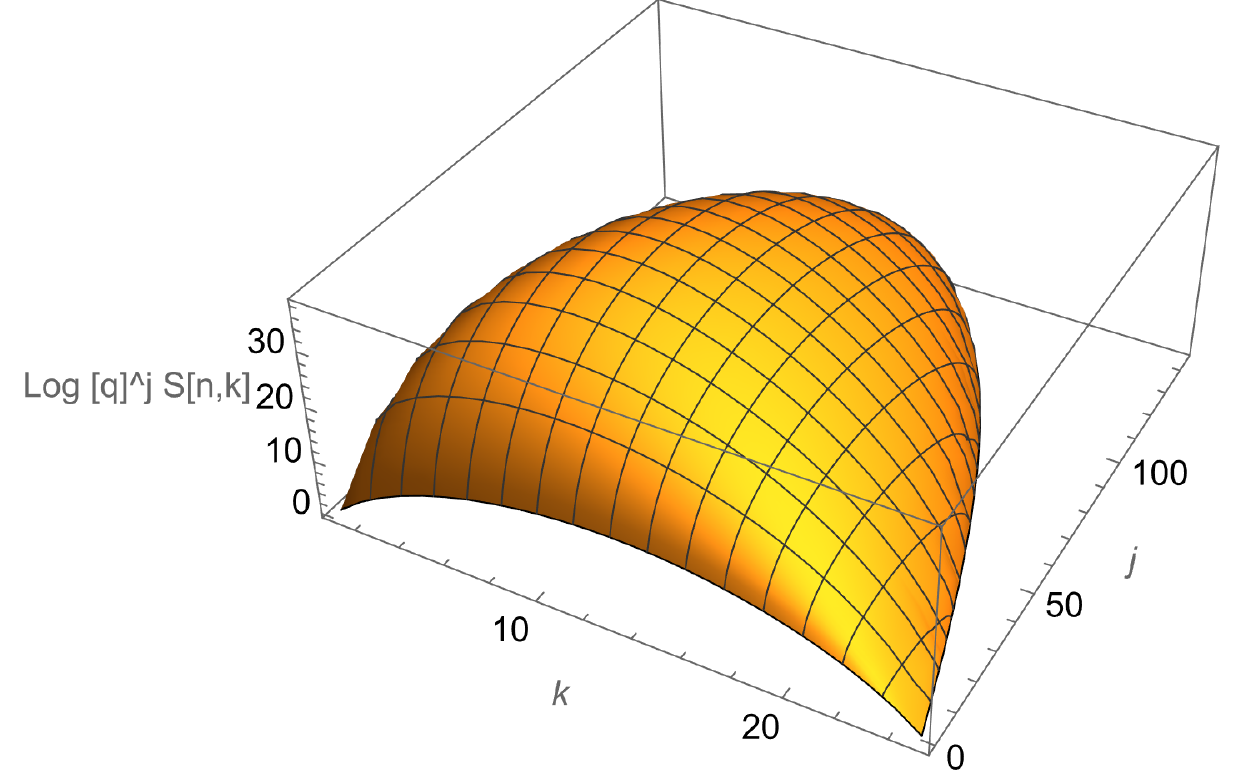}
  \hspace{1cm}
  \includegraphics[scale=0.5]{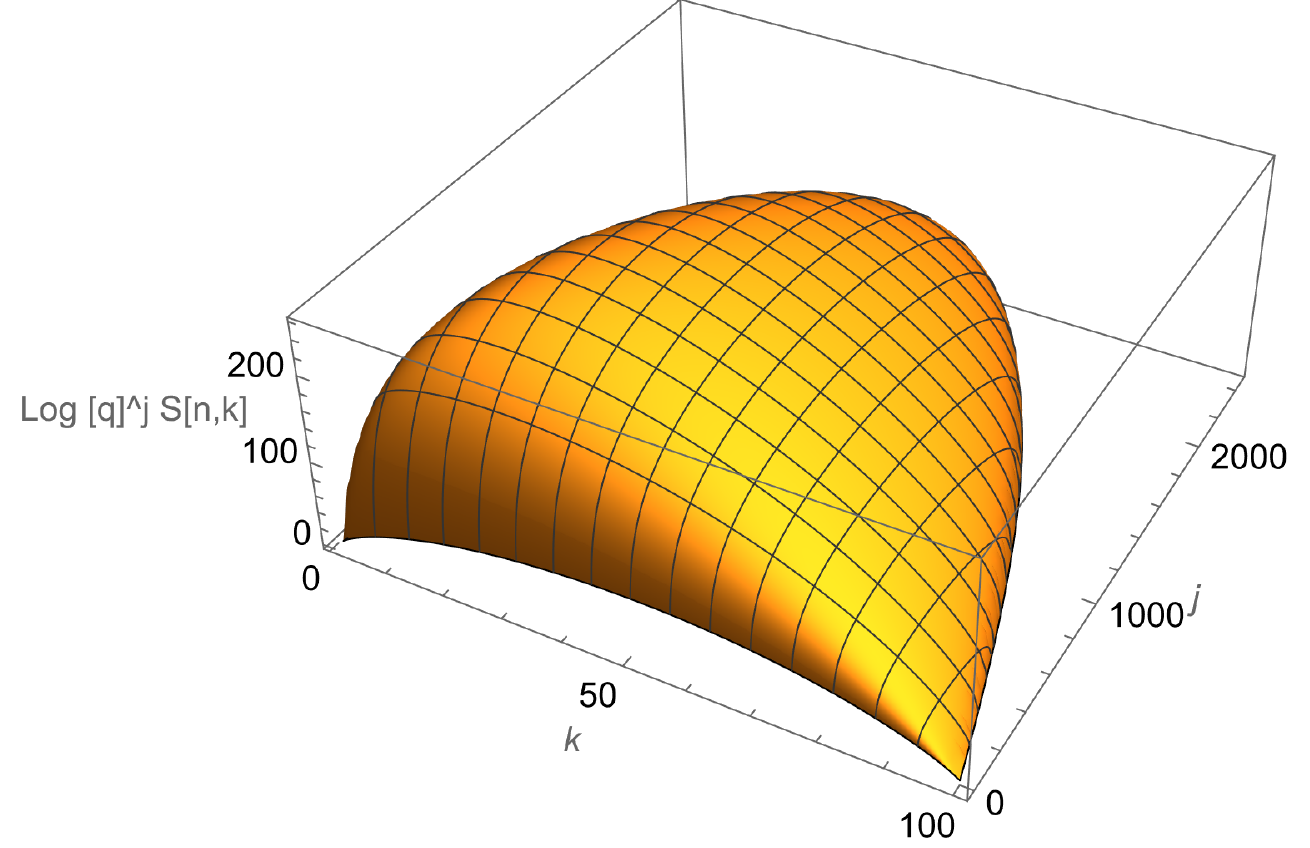}
  \caption{Plots of $\log([q]^j S[n, k])$ as a function of $k$ and $j$ for $n=25$ and $n=100$ }
  \label{S_ns}
\end{figure}

\subsection{Graded Euler characteristics}

By \cite{SW:hdf}, the alternating sums 
$$
\left(\sum_{k=0}^n (-1)^{n-k} q^{m(n-k)} S^o[n, k]\right) - 1
\qquad\text{and}\qquad
\left(\sum_{k=0}^n (-1)^{n-k} q^{m(n-k)} S^o_B[n, k]\right) - 1
$$
are conjecturally graded Euler characteristics of chain complexes obtained by applying generalized exterior derivatives to super coinvariant algebras in types $A$ and $B$, respectively. When $m=1$, the complexes in question use classical exterior differentiation and are an algebraic analogue of the de Rham complex. One of the main results of \cite{SW:hdf} shows that the complex is exact in this case, and correspondingly the $m=1$ alternating sums are $0$. 

The following specific alternating sum appears to exhibit significant structure.

\begin{conj}
  The polynomial
  \begin{equation}\label{eq:chi_2_A}
   \left( \sum_{k=0}^n (-1)^{n-k} q^{2(n-k)} S^o[n, k]\right) - 1
  \end{equation}
  is palindromic ignoring signs, with the same number of positive and negative coefficients, and where the lower-degree half of nonzero coefficients are positive and the rest are negative.
\end{conj}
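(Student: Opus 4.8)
The plan is to sidestep the sign-reversing involution behind \Cref{div:sumA} (which only delivers divisibility by $q^2-q$, not positivity or palindromicity) and instead evaluate the whole sum in closed form. Set
\[
g_n(q) = \sum_{k=0}^n (-q^2)^{n-k} S^o[n,k],
\]
so that the polynomial in \eqref{eq:chi_2_A} is exactly $g_n(q)-1$. I claim that $g_n(q) = (1-q)(1+q)^n + q^{n+1}$, and all three asserted properties drop out of this single identity.

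First I would derive a recursion in $n$ from \eqref{S^o:rec}. Substituting $S^o[n,k]=[k]\,(S^o[n-1,k-1]+S^o[n-1,k])$ and re-indexing collects the two resulting sums into $\sum_j (-q^2)^{n-1-j}\big([j+1]-q^2[j]\big)\,S^o[n-1,j]$, and the elementary identity $[j+1]-q^2[j]=1+q-q^{j+1}$ splits this as $(1+q)g_{n-1} - q\,h_{n-1}$, where $h_n(q):=\sum_k(-q^2)^{n-k}q^k S^o[n,k]$. The crux of the argument is that $h_n$ satisfies an \emph{autonomous} recursion: the same manipulation applied to $h_n$ produces the combination $[j+1]-q[j]=1$ instead, so that $h_n=q\,h_{n-1}$; since $h_0=1$ this gives $h_n=q^n$. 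The system therefore decouples into $g_n=(1+q)g_{n-1}-q^n$ with $g_0=1$, and unrolling this linear recursion (or checking that the claimed formula solves it) yields $g_n(q)=(1-q)(1+q)^n+q^{n+1}$.

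Then I would read off the coefficients. Since $(1-q)(1+q)^n=\sum_m\big[\binom nm-\binom n{m-1}\big]q^m$, and the extra $+q^{n+1}$ together with the subtracted $1$ precisely cancel the boundary coefficients at degrees $n+1$ and $0$, one obtains $g_n(q)-1=\sum_{m=1}^n a_m q^m$ with $a_m=\binom nm-\binom n{m-1}$. Using $\binom n{n+1-m}=\binom n{m-1}$ and $\binom n{n-m}=\binom nm$ gives $a_{n+1-m}=-a_m$, i.e.\ the polynomial is anti-palindromic; this yields ``palindromic ignoring signs'' and, via the involution $m\mapsto n+1-m$ on $\{1,\dots,n\}$ (fixed-point-free for even $n$, with a single center where $a_m=0$ for odd $n$), the equality of the numbers of positive and negative coefficients. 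Finally $\binom nm/\binom n{m-1}=(n-m+1)/m$ shows $a_m>0$ for $m<(n+1)/2$ and $a_m<0$ for $m>(n+1)/2$, so the lower-degree nonzero coefficients are positive and the rest negative, which is the remaining assertion.

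The only genuinely new ingredient is the closed form, and within it the one nonobvious step is recognizing that the auxiliary sum decouples as $h_n=q^n$; after that the proof is routine binomial bookkeeping. I expect that decoupling—a purely algebraic phenomenon—to be the main obstacle to discovering the argument. It would be natural, in keeping with the involution proofs of \Cref{as}, to instead ask for a bijective proof of the compact formula $a_m=\binom nm-\binom n{m-1}$ directly in terms of ordered set partitions weighted by $2(n-k)+\inv$.
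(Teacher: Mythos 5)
Your proof is correct, and it settles what the paper leaves open: this statement appears in \Cref{coq} as a conjecture, supported there only by computer data for small $n$ and the observed resemblance to \cite[A050176]{oeis}, so there is no proof in the paper to compare against. I checked the argument in detail. From \eqref{S^o:rec} and the reindexing $j=k-1$ one gets
\[
g_n=\sum_{j}(-q^2)^{n-1-j}\bigl([j+1]-q^2[j]\bigr)S^o[n-1,j],
\qquad
h_n=q\sum_{j}(-q^2)^{n-1-j}q^j\bigl([j+1]-q[j]\bigr)S^o[n-1,j],
\]
and the evaluations $[j+1]-q^2[j]=(1+q)-q^{j+1}$ and $[j+1]-q[j]=1$ give $g_n=(1+q)g_{n-1}-q\,h_{n-1}$ and $h_n=q\,h_{n-1}$, hence $h_n=q^n$ and $g_n=(1+q)g_{n-1}-q^n$, whose solution with $g_0=1$ is indeed $(1-q)(1+q)^n+q^{n+1}$. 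The coefficient bookkeeping ($a_m=\binom{n}{m}-\binom{n}{m-1}$ for $1\le m\le n$, anti-palindromicity $a_{n+1-m}=-a_m$, and the sign change at $m=(n+1)/2$) is routine and correct, and yields all three assertions of the conjecture. Your closed form has two useful by-products: it makes the paper's ``slight variations on A050176'' remark precise, and it shows that the entries of \Cref{tab:chi_2_A} carry a spurious constant term --- since $g_n(0)=1$, the polynomial $g_n-1$ has no constant term, so the displayed polynomials are $g_n$ itself rather than $g_n-1$.

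One further observation, answering the question you raise at the end: the decoupling $h_n=q^n$, which you single out as the one nonobvious step, already has a combinatorial proof via the paper's own involution $\phi$ of \Cref{involA}. Splitting increases both the number of blocks $k$ and $\inv$ by exactly one, and merging decreases both by one, so the statistic $2(n-k)+k+\inv\omega$ appearing in $h_n=\sum_{\omega}(\sgn\omega)\,q^{2(n-k)+k+\inv\omega}$ (sum over all ordered set partitions $\omega$ of $[n]$, with $k$ the number of blocks of $\omega$) is preserved by $\phi$ while the sign is reversed; all non-fixed points cancel, and the unique fixed point $(1/2/\cdots/n)$ contributes $q^{0+n+0}=q^n$. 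By contrast, the same involution shifts the exponent $2(n-k)+\inv\omega$ relevant to $g_n$ by $\mp 1$, which is exactly why \Cref{div:sumA} only yields divisibility. Isolating the weight for which $\phi$ is exactly weight-preserving is what your auxiliary sum $h_n$ accomplishes, and with this remark your entire argument becomes combinatorial apart from solving one linear recursion.
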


The coefficients of \eqref{eq:chi_2_A} in fact appear to be slight variations on \cite[A050176]{oeis}. See \Cref{tab:chi_2_A} for examples.

\begin{table}
    \centering
    \begin{tabular}{c|c}
        $\sum_{k=0}^n (-1)^{n-k} q^{2(n-k)} S^o[n, k] - 1$ & $(n+1)$st row of A050176 \\
        \midrule
        $-6q^7 - 14q^6 - 14q^5 + 14q^3 + 14q^2 + 6q + 1$ & $1, 6, 14, 14, 14, 14, 6, 1$ \\
        $-7q^8 - 20q^7 - 28q^6 - 14q^5 + 14q^4 + 28q^3 + 20q^2 + 7q + 1$ & $1, 7, 20, 28, 14, 28, 20, 7, 1$
    \end{tabular}
    \caption{The $n=7$ and $n=8$ cases of \eqref{eq:chi_2_A} and the $8$th and $9$th rows of \cite[A050176]{oeis}.}
    \label{tab:chi_2_A}
\end{table}



\nocite{*}
\bibliographystyle{alpha}

\end{document}